\documentclass[12pt]{article}

\usepackage{color}

\usepackage{amsmath,amsthm,amssymb}
\allowdisplaybreaks
\usepackage{mathrsfs}
\usepackage{cite}

\usepackage[pdftex,bookmarksopen=normd@maths.usyd.edu.autrue,bookmarks=true,unicode,setpagesize]{hyperref}
\hypersetup{colorlinks=true,linkcolor=black,citecolor=black}

\usepackage{color}

\textwidth15.5cm
\textheight21cm
\oddsidemargin0cm
\evensidemargin0cm

\newtheorem{theorem}{Theorem}
\newtheorem{corollary}[theorem]{Corollary}

\theoremstyle{remark}
\newtheorem{definition}[theorem]{Definition}
\theoremstyle{remark}
\newtheorem{example}[theorem]{Example}
\theoremstyle{remark}
\newtheorem{remark}[theorem]{Remark}

\newcommand{\R}{{\mathbb R}}
\newcommand{\N}{{\mathbb N}}

\newcommand{\K}{{\mathbb K}}
\newcommand{\M}{{\mathbb M}}
\newcommand{\la}{\langle}
\newcommand{\ra}{\rangle}
\newcommand{\Rp}{{\mathbb R_+}}

\newcommand{\FCC}{\mathcal{FC}(\Gamma(\hat X))}

\newcommand{\FCK}{\mathcal{FC}(\K(X))}

\begin{document}

\begin{center}{\Large \bf
Laplace operators on the cone of Radon measures
}\end{center}

{\large Yuri Kondratiev}\\
 Fakult\"at f\"ur Mathematik, Universit\"at
Bielefeld, Postfach 10 01 31, D-33501 Bielefeld, Germany;  NPU, Kyiv, Ukraine\\
 e-mail:
\texttt{kondrat@mathematik.uni-bielefeld.de}\vspace{2mm}

{\large Eugene Lytvynov}\\ Department of Mathematics,
Swansea University, Singleton Park, Swansea SA2 8PP, U.K.\\
e-mail: \texttt{e.lytvynov@swansea.ac.uk}\vspace{2mm}

{\large Anatoly Vershik }\\
 Laboratory of Representation Theory
and Dynamical Systems,
St.Petersburg Department of Steklov Institute of Mathematics,
27 Fontanka, St.Petersburg 191023, Russia; Department of Mathematics, St.\ Petersburg State University, St. Peterhof, Bibliotechnaya 2, Petersburg, 198908, Russia; Institute for Problems of Transmission and Information, B. Karetnyi 19, Moscow, 127994, Russia\\
 e-mail:
\texttt{vershik@pdmi.ras.ru}\vspace{4mm}

{\small
\begin{center}
{\bf Abstract}
\end{center}

\noindent
We consider the infinite-dimensional Lie group $\mathfrak G$ which is the semidirect product of the group of compactly supported diffeomorphisms of a Riemannian manifold $X$ and the commutative multiplicative group of functions on $X$. The group $\mathfrak G$ naturally acts on the space $\M(X)$ of Radon measures on $X$. We would like  to define a Laplace operator associated with   a natural representation of $\mathfrak G$ in  $L^2(\M(X),\mu)$. Here $\mu$ is assumed to be  the law of a measure-valued L\'evy process.
A unitary representation of the group cannot be determined, since the measure $\mu$ is not quasi-invariant with respect to the action of the group $\mathfrak G$. Consequently,
 operators of a representation of the Lie algebra and its universal enveloping algebra (in particular, a Laplace operator)
 are not defined.
 Nevertheless, we determine the  Laplace operator by using a special property of the action of the group $\mathfrak G$ (a partial quasi-invariance).
 We further prove the essential self-adjointness of the Laplace operator. Finally, we explicitly construct a diffusion process on $\M(X)$ whose generator is the Laplace operator.}
\vspace{2mm}

{\bf Keywords:} Completely random measure; diffusion process;  Laplace operator; representations of big groups. \vspace{2mm}

{\bf 2010 MSC:} 20C99, 20P05, 47B38, 60G57, 60J60

\newpage

\section{Introduction}\label{drre6eiu}

In the representation theory, so-called quasi-regular representations of a group $\mathfrak G$ in a space  $L^2(\Omega,\mu)$
play an important role. Here  $\Omega$ is a homogeneous space for the group $\mathfrak G$ and $\mu$ is a (probability) measure  that is quasi-invariant with respect to the action of $\mathfrak G$ on $\Omega$. However, in the case studied in this paper as well as in similar cases, the measure is not quasi-invariant  with respect to the action of the group, so that one cannot define a quasi-regular unitary representation of the group. Hence, the problem of construction of a representation of the Lie algebra, of a Laplace operator and other operators from the universal enveloping algebra is highly non-trivial. Moreover,
we will  deal with the situation in which the representation of the Lie algebra cannot be
realized but, nevertheless, the Laplace operator may be defined correctly.

An important example of  a quasi-regular representation is the following case. Let $X$ be a smooth, noncompact Riemannian manifold, and let $\mathfrak G=\operatorname{Diff}_0(X)$, the group of $C^\infty$ diffeomorphisms of $X$ that are equal to the identity outside a compact set.
 Let $\Omega$ be the space $\Gamma(X)$ of locally finite subsets (configurations) in $X$, and let $\mu$ be the Poisson measure on $\Gamma(X)$. Then the Poisson measure is quasi-invariant with respect to the action of $\operatorname{Diff}_0(X)$ on $\Gamma(X)$, and the  corresponding unitary representation of $\operatorname{Diff}_0(X)$ in the $L^2$-space of the Poisson measure  was studied in \cite{VGG}, see also \cite{GGPS}. Developing analysis associated with this representation, one naturally arrives at a differential structure on the configuration space $\Gamma(X)$, and derives a Laplace operator on $\Gamma(X)$, see \cite{AKR}.
 In fact, one gets a certain lifting of the differential structure of the manifold $X$ to the configuration space $\Gamma(X)$. Hereby the Riemannian volume on $X$ is lifted to the Poisson measure on $\Gamma(X)$, and the Laplace--Beltrami operator on $X$, generated by the Dirichlet integral with respect to the Riemannian volume, is lifted to the generator of the Dirichlet form of the Poisson measure. The associated diffusion can be described as a Markov process on $\Gamma(X)$  in which movement of each point of configuration is a  Brownian motion in $X$, independent of the other points of the configuration, see \cite{KLR,Surgailis2,Surgailis}.

%Let us now recall the definition of a semidirect product of groups. Assume that $ G$ and  $ \Theta$ are groups,
%and denote by $\operatorname{Aut}(\Theta)$ the group of all automorphisms of $\Theta$. Assume that $\varphi: G\to\operatorname{Aut}(\Theta)$ is a group homomorphism. Then the semidirect product of $G$ and $\Theta$, denoted by $G   \rightthreetimes \Theta$,  is the Cartesian product $G\times\Theta$ equipped with the group operation
%$$(g_1,\theta_1)(g_2,\theta_2)= (g_1g_2,\theta_1(\alpha(g_1)\theta_2)).$$

Let $C_0(X\to\Rp)$ denote the multiplicative group of continuous functions on $X$ with values in $\Rp:=(0,\infty)$ that are equal to one outside a compact set. (Analogously, we could have considered $C_0(X)$, the additive group of real-valued continuous functions on $X$ with compact support.)
The group of diffeomorphisms, $\operatorname{Diff}_0(X)$, naturally acts on $X$, hence on $C_0(X\to\Rp)$. In this paper, we will consider the group
$$\mathfrak G=\operatorname{Diff}_0(X) \rightthreetimes C_0(X\to\Rp),$$ the semidirect product of $\operatorname{Diff}_0(X)$ and
$C_0(X\to\Rp)$. This group and similar semidirect products and their representations play a fundamental role in mathematical physics and quantum field theory. Even more important than $\mathfrak G$ are the semidirect products in which the space $C_0(X\to\Rp)$ is replaced by a current space, i.e., a space of functions on $X$ with values in a Lie group. Note that, in our case, this Lie group, $\Rp$, is commutative.
The case of a commutative group, studied in this paper, is also important and it is interesting to find  relations with the theory of random fields and with infinite dimensional dynamical systems.

A wide class of representations of a group like $\mathfrak G$ is obtained by considering a probability measure on a space of
locally finite configurations
 These studies were initiated in
\cite{VGG}, and almost at the same time in \cite{GGPS}, but in less generality. See also \cite{VGG1,VGG3,VGG2,KLU}.

  The group $\mathfrak G$ naturally acts on the space $\M(X)$ of Radon measures on $X$. So a natural question is to identify a class of laws of random measures (equivalently, probability measures on $\M(X)$) which are quasi-invariant with respect to the action of the group $\mathfrak G$ and which allow for corresponding analysis, like Laplace operator,  diffusion in $\M(X)$, etc.   We will search for such random measures within the class of laws of measure-valued L\'evy processes whose intensity measure is infinite.
 Each  measure $\mu$ from this class is concentrated on the set $\K(X)$ of discrete Radon measures of the form $\sum_{i=1}^\infty s_i\delta_{x_i}$,
 where $\delta_{x_i}$ is the Dirac measure with mass at $x_i$ and $s_i>0$. Furthermore, $\mu$-almost surely, the configuration $\{x_i\}_{i=1}^\infty$ is dense in $X$, in particular, the set $\{x_i\}_{i=1}^\infty$  is not locally finite.

 A noteworthy example of a measure from this class is the gamma measure.
 In the case where $X$ is compact, it was proven  in  \cite{TsVY} that the gamma measure  is the unique law of a measure-valued L\'evy process which is quasi-invariant with respect to the action of the group $C_0(X\to\Rp)$ and which admits an equivalent $\sigma$-finite measure which is projective invariant (i.e., invariant up to a constant factor) with respect to the action of $C_0(X\to\Rp)$.
The latter ($\sigma$-finite) measure was studied in  \cite{Vershik} and was called there  the infinite dimensional Lebesgue measure.
 See also the references in \cite{Vershik} and\cite{Vershik2,Vershik3}.
 We also note that, in papers \cite{TsVY,VGG1,VGG2,VGG3}, the gamma measure was used in the representation theory of the group $SL(2,F)$, where $F$ is an algebra of functions on a manifold.

 In this paper, we first single out a class of laws of  measure-valued L\'evy processes $\mu$ which are quasi-invariant with respect to the action of the  group $C_0(X\to\Rp)$, compare with \cite{LS,vRYZ}. However, since the intensity measure of $\mu$ is infinite,
 the measure $\mu$ is not quasi-invariant with respect to the action of the diffeomorphsim group  $\operatorname{Diff}_0(X)$, and, consequently, it is not quasi-invariant with respect to the action of the group $\mathfrak G$.
 Thus, we do not have a quasi-regular representation of $\mathfrak G$ in $L^2(\K(X),\mu)$.

 Nevertheless, the action of the group $\mathfrak G$ on $\K(X)$ allows us to introduce the notion of a directional derivative on $\K(X)$, a tangent space, and a gradient. Furthermore, we introduce the notion of  partial quasi-invariance of a measure with respect to the action of a group. We show that the measure  $\mu$ is partially quasi-invariant with respect to $\mathfrak G$, and this essentially allows us to construct an associated Laplace operator.

 We note that, for each measure $\mu$ under consideration, we obtain a quasi-regular representation of the group $C_0(X\to\Rp)$ on $L^2(\K(X),\mu)$ and a corresponding integration
 by parts formula. Furthermore, there exists a filtration $(\mathcal F_n)_{n=1}^\infty$ on $\K(X)$ such that the $\sigma$-algebras $\mathcal F_n$ generate the $\sigma$-algebra on which
 the measure $\mu$ is defined, and hence the union of the spaces $L^2(\K(X),\mathcal F_n,\mu)$ is dense in $L^2(\K(X),\mu)$. The action of the group $\operatorname{Diff}_0(X)$ on $\K(X)$ transforms each $\sigma$-algebra $\mathcal F_n$ into itself, and the restriction of $\mu$ to $\mathcal F_n$ is quasi-regular with respect to the action of $\operatorname{Diff}_0(X)$. This implies a quasi-regular representation of  $\operatorname{Diff}_0(X)$ in $L^2(\K(X),\mathcal F_n,\mu)$, and we also obtain an integration by parts formula on this space. It should be stressed that the $\sigma$-algebras $\mathcal F_n$ are not invariant with respect to the action of the group $C_0(X\to\Rp)$. Despite the absence of a proper integration by parts formula related to the Lie algebra $\mathfrak g$ of the Lie group $\mathfrak G$, using the above facts, we arrive at a proper  Laplace operator related to the Lie algebra $\mathfrak g$, and this Laplace operator is self-adjoint in $L^2(\K(X),\mu)$.

 We next prove that the Laplace operator is essentially self-adjoint on a set of test functions.
  Assuming that the dimension of the manifold $X$ is $\ge 2$, we then  explicitly construct a diffusion process on $\K(X)$ whose generator is the Laplace operator.

 Finally, we notice that a different natural choice of a tangent space leads to a different, well defined Laplace operator in $L^2(\K(X),\mu)$. Using the theory of Dirichlet forms, we can prove that the corresponding diffusion process in $\K(X)$ exists. However, its explicit construction is still an open problem, even at a heuristic level.

 \section{Partial quasi-invariance}

 \subsection{The group $\mathfrak S$}
 Let $X$ be a separable, connected, oriented $C^\infty$ (non-compact) Riemannian manifold. Recall that $\operatorname{Diff}_0(X)$ denotes the group of diffeomorphisms of $X$ which are equal to the identity outside a compact set, and $C_0(X\to\Rp)$ denotes the multiplicative group of continuous functions on $X$ with values in $\Rp$ which are equal to one outside a compact set.
 The group $\operatorname{Diff}_0(X)$ acts on $C_0(X\to\Rp)$ by automorphisms:
for each $\psi\in \operatorname{Diff}_0(X)$,
$$C_0(X\to\Rp)\ni \theta \mapsto \alpha(\psi)\theta:=\theta\circ\psi^{-1}\in C_0(X\to\Rp).$$
We denote
$$\mathfrak G:= \operatorname{Diff}_0(X) \rightthreetimes C_0(X\to\Rp),$$
the semidirect product of $\operatorname{Diff}_0(X)$ and $C_0(X\to\Rp)$ with respect to $\alpha$. Thus, as a set, $\mathfrak G$ is equal to the Cartesian product of   $\operatorname{Diff}_0(X)$ and $C_0(X\to\Rp)$, and the multiplication in $\mathfrak G$ is given by
$$g_1g_2=(\psi_1\circ\psi_2,\, \theta_1(\theta_2\circ\psi_1^{-1}))\quad\text{for $g_1=(\psi_1,\theta_1),\, g_2=(\psi_2,\theta_2)\in \mathfrak G$.}$$

Let $\mathcal B(X)$ denote the Borel $\sigma$-algebra on $ X$.
 Let $\M(X)$ denote the space of all  Radon measures on $(X,\mathcal B(X))$. The space $\mathbb M(X)$ is equipped with the vague topology.

 The group $\mathfrak G$  naturally acts on $\mathbb M(X)$:  for any $g=(\psi,\theta)\in\mathfrak G$ and any $\eta\in\mathbb M(X)$, we define the Radon measure $g\eta$ by
\begin{equation}\label{uy87r8o7f}
 d(g\eta)(x):=\theta(x)\,d(\psi^*\eta)(x).\end{equation}
Here $\psi^*\eta$ is the pushforward of $\eta$ under $\psi$. In particular, each $\psi\in \operatorname{Diff}_0(X)$ acts on $\M(X)$ as $\eta\mapsto\psi^*\eta$, while each $\theta\in C_0(X\to\Rp)$ acts on $\M(X)$ as $\eta\mapsto\theta\cdot \eta$.

We recall that a probability measure $\mu$ on $(\M(X),\mathcal B(\M(X)))$ is called (the law  of) a random measure, see e.g.\ \cite[Chap.~1]{Kal}. If a random measure $\mu$ is quasi-invariant with respect to the action of $\mathfrak G$ on $\M(X)$, then we get a so-called quasi-regular (unitary) representation of $\mathfrak G$ in $L^2(\M(X),\mu)$, given by
$$ (U_gF)(\eta)=F(g^{-1}\eta)\sqrt{\frac{d\mu^{g    }}{d\mu}(\eta)},\quad g\in\mathfrak G.$$
Here $\mu^g$ is the pushforward of $\mu$ under $g$. Clearly, the quasi-invariance of $\mu$ with respect to the action of $\mathfrak G$ is equivalent to the quasi-invariance of $\mu$ with respect to the action of both groups  $\operatorname{Diff}_0(X)$ and $C_0(X\to\Rp)$.

We will search for quasi-invariant measures $\mu$ within the following class of measures.

\subsection{Measure-valued L\'evy processes} \label{syigufuq}

\begin{definition} \label{fdrtder6de6} Let $\lambda$ be a measure on $\Rp$ which satisfies
\begin{equation}\label{uyr76r586trgfrr}\int_{\Rp}\min\{1,s\}\,d\lambda(s)<\infty.\end{equation}
The {\it law of a   measure-valued L\'evy process on $X$ with intensity measure $\lambda$} is defined as the unique probability measure $\mu_\lambda$ on $(\M(X),\mathcal B(\M(X)))$ which has the Fourier transform
\begin{equation}\label{tyeu64e37ord}\int_{\mathbb M(X)}e^{i\la \varphi,\eta\ra}\, d\mu_\lambda(\eta)=\exp\bigg[
\int_X
\int_{\Rp}(e^{is\varphi(x)}-1)\,d\lambda(s)
\,dx \bigg],\quad \varphi\in C_0(X).\end{equation}
Here, $C_0(X)$ denotes the space of continuous functions on $X$ with compact support,  $\la \varphi,\eta\ra:=\int_X \varphi\,d\eta$, and $dx$ denotes the Riemannian volume on $X$.
\end{definition}

The existence of the measure $\mu_\lambda$ from Definition~\ref{fdrtder6de6} follows from Kingman \cite{Kingman1}.
Note that the measure $\mu_\lambda$ has the property that, for any mutually disjoint  sets
$A_1,\dots,A_n\in\mathcal B_0(X)$,  the random variables
$\eta(A_1),\dots,\eta(A_n)$ are independent. Furthermore, for any  $A_1, A_2\in\mathcal B_0(X)$ in $X$ such that $\int_{A_1}dx=\int_{A_2}dx$, the random variables $\eta(A_1)$ and $\eta(A_2)$ have the same distribution. Here and below, $\mathcal B_0(X)$ denotes the collection of all sets $A\in\mathcal B(X)$ whose closure is compact.

\begin{remark}
Note  that  $\mu_\lambda$ belongs to the class of probability measures on $\mathcal D'(X)$---the dual of the nuclear space $\mathcal D(X)=C_0^\infty(X)$---which
are called {\it generalized stochastic processes with independent values at  every point}. (Evidently, $\M(X)\subset\mathcal D'(X)$.) These probability measures were studied by Gel'fand and Vilenkin in \cite{GV}.
\end{remark}

Below, we will heavily use the following explicit construction of the measure $\mu_\lambda$.  We define a metric on $\Rp$ by
$$ d_{\Rp}(s_1,s_2):=\left|\log(s_1)-\log(s_2)\right|,\quad s_1,s_2\in\Rp.$$
Then $\Rp$ becomes a locally compact Polish space, and any set of the form $[a,b]$, with $0<a<b<\infty$, is compact.
We denote $\hat X:=\Rp\times X$ and define the  configuration space over $\hat X$ by
$$\Gamma(\hat X):=\big\{\gamma\subset \hat X\mid |\gamma\cap\Lambda|<\infty\text{ for each compact }\Lambda\subset \hat X\,\big\}.
$$
Here $|\gamma\cap\Lambda|$ denotes the cardinality of the  set $\gamma\cap\Lambda$. The space $\Gamma(\hat X)$ is endowed with the vague topology (after identification of $\gamma\in\Gamma(\hat X)$ with the Radon measure $\sum_{(s,x)\in\gamma}\delta_{(s,x)}$ on $\hat X$). We denote by $\pi_\varkappa$ the Poisson measure on $(\Gamma(\hat X),\mathcal B(\Gamma(\hat X)))$ with intensity measure
\begin{equation}\label{5}
d\varkappa(s,x):=d\lambda(s)\,dx,
\end{equation}
see e.g.\ \cite[Sec.~1.3]{Kal}.

We denote by $\Gamma_{pf}(\hat X)$ the measurable subset of $\Gamma(\hat X)$ which consists of all configurations $\gamma$ which satisfy:

\begin{itemize}
\item[(i)] ({\it pinpointing}) if $(s_1,x_1),(s_2,x_2)\in\gamma$ and $(s_1,x_1)\ne(s_2,x_2)$, then $x_1\ne x_2$;

\item[(ii)]  ({\it finite local mass}) for each  $A\in\mathcal B_0(X)$, $\displaystyle\sum_{(s,x)\in\gamma\cap (\Rp\times A)}s<\infty$.
\end{itemize}
Then, by the properties of the Poisson measure, $\pi_\varkappa(\Gamma_{pf}(\hat X))=1$.
We  construct a measurable mapping
$\mathscr R  :\Gamma_{pf}(\hat X)\to\mathbb M(X) $ by setting
\begin{equation}\label{uyfgutfrrsesa}
\Gamma_{pf}(\hat X)\ni \gamma=\{(s_i,x_i)\}\mapsto \mathscr R\gamma:=\sum_i s_i\delta_{x_i}\in \mathbb M(X),\end{equation}
see \cite[Theorem~6.2]{HKPR}.
Then the measure $ \mu_\lambda$ is the pushforward of the Poisson measure $\pi_\varkappa$ under $\mathscr R$.

We denote by $\K(X)$ the  cone of  discrete Radon measures on $X$:
$$\mathbb K(X):=\left\{
\eta=\sum_i s_i\delta_{x_i}\in \mathbb M(X) \mid s_i>0,\, x_i\in X
\right\}.$$
Here, the atoms $x_i$ are assumed to be distinct and their total number is at most countable.  We denote $\tau(\eta):=\{x_i\}$, i.e., the set on which the measure $\eta$ is concentrated. For $\eta\in\K(X)$ and $x\in\tau(\eta)$, we  denote by $s_x$ the mass of $\eta$ at  point $x$, i.e., $s_x
:=\eta(\{x\})$. Thus, each $\eta\in\K(X)$ can be written in the form $\eta=\sum_{x\in\tau(\eta)}s_x\delta_x$.
As shown in \cite{HKPR}, $\mathbb K(X)\in\mathcal B(\mathbb M(X))$. We denote by $\mathcal B(\mathbb K(X))$ the trace $\sigma$-algebra of $\mathcal B(\mathbb M(X))$ on $\K(X)$.
It follows from \eqref{uyfgutfrrsesa} that $\mathscr R$ is a bijective mapping between $\Gamma_{pf}(\hat X)$ and $\K(X)$, thus $\mu_\lambda(\K(X))=1$, i.e., we can consider $\mu_\lambda$ as a probability measure on $(\K(X),\mathcal B(\mathbb K(X)))$.

In the case where $\lambda(\Rp)<\infty$, $\mu_\lambda$ is, in fact, a marked Poisson measure. Indeed, in this case, $\mu_\lambda$ is concentrated on a subset of $\K(X)$ which consists of all $\eta\in\K(X)$ such that $\tau(\eta)$ is a locally finite configuration in $X$, i.e., $|\tau(\eta)\cap A|<\infty$ for each compact $A\subset X$. The Laplace operator related to the marked Poisson measures was studied in \cite{KLU}. So, in this paper, we will be interested in the (much more complicated) case where
\begin{equation}\label{guf7r}
\lambda(\Rp)=\infty.\end{equation}
 In this case, it can be shown that, with $\mu_\lambda$-probability one, $\tau(\eta)$ is a dense subset of $X$

Furthermore, we will assume that the measure $\lambda$ is absolutely continuous with respect to the Lebesgue measure, and let
\begin{equation}\label{vytd7}d\lambda(s)=\frac{l(s)}{s}\,ds,
\end{equation}
where $l:\Rp\to[0,\infty)$.
Under this assumption, condition \eqref{uyr76r586trgfrr}
becomes
\begin{equation}\label{tyd6e6i}
\int_{\Rp} l(s)\min\{1,s^{-1}\}\,ds<\infty.\end{equation}

\begin{example}\label{example3}
By choosing $l(s)=e^{-s}$, one obtains the gamma measure $\mu_\lambda$.  The Laplace transform of this measure is given by
$$\int_{\K(X)}e^{\la \varphi,\eta \ra}\,d\mu_\lambda(\eta)=\exp\left[-\int_{X}\log(1-\varphi(x))\,dx\right],\quad \varphi\in C_0(X),\ \varphi<1.$$
\end{example}

\subsection{Quasi-invariance with respect to $C_0^\infty(X\to\Rp)$}

We will now single out those measures $\mu_\lambda$ that are quasi-invariant with respect to the action of $C_0^\infty(X\to\Rp)$.

\begin{theorem}\label{guyr7r}
Assume that \eqref{vytd7} and \eqref{tyd6e6i} hold. Further assume  that
\begin{equation}\label{uyfr7r78r}l(s)>0\quad\text{for all }
s\in\Rp,
\end{equation}
 and  for each $n\in\N$, there exists $\varepsilon>0$ such that
\begin{equation}\label{rtew64}\sup_{r\in[1/n,\,n]}\int_{(0,\varepsilon)}|l(rs)-l(s)|\,s^{-1}\,ds<\infty.\end{equation}
Then the measure $\mu_\lambda$  is quasi-invariant with respect to all transformations from\linebreak  $C_0^\infty(X\to\Rp)$. More precisely, each $\theta\in C_0(X\to\Rp)$ maps $\mathbb K(X)$ into itself, and the pushforward of $\mu_\lambda$ under $\theta$, denoted by $\mu_\lambda^{\theta}$, is equivalent  to $\mu_\lambda$. Furthermore, the corresponding density is given by
\begin{align}
\frac{d\mu_\lambda^\theta}{d\mu_\lambda}(\eta)=\exp\bigg[\int_X&
\log\left(\frac{l(\theta^{-1}(x)s_x)}{l(s_x)}\right)s_x^{-1}\,d\eta(x)\notag\\
&+\int_X\int_{\Rp}\big(l(s)-l(\theta^{-1}(x)s)\big)s^{-1}\,ds\,dx\bigg].
\label{rtew6u4w6}\end{align}
\end{theorem}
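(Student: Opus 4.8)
The plan is to transport everything to the Poisson configuration space $\Gamma_{pf}(\hat X)$ through the bijection $\mathscr R$ and then use the classical description of absolutely continuous transformations of Poisson measures. Throughout, smoothness of $\theta$ plays no role, so I would prove the statement for all $\theta\in C_0(X\to\Rp)$.

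\emph{Step 1: lifting the action.} For $\theta\in C_0(X\to\Rp)$ the map $T_\theta\colon\eta\mapsto\theta\cdot\eta$ sends $\K(X)$ into itself: if $\eta=\sum_x s_x\delta_x$ then $T_\theta\eta=\sum_x\theta(x)s_x\delta_x$, which is again a discrete measure with the same (distinct) atoms and strictly positive masses, and it is Radon because $\theta$ is locally bounded. Define $\hat\theta\colon\Gamma(\hat X)\to\Gamma(\hat X)$ by $\hat\theta\gamma:=\{(\theta(x)s,x):(s,x)\in\gamma\}$. Since $\hat\theta$ leaves the $X$-coordinates untouched and only rescales masses by the locally bounded positive factor $\theta$, it is a measurable bijection (with inverse $\widehat{\theta^{-1}}$, where $\theta^{-1}:=1/\theta\in C_0(X\to\Rp)$) that maps $\Gamma_{pf}(\hat X)$ onto itself, and one checks directly from \eqref{uyfgutfrrsesa} that $\mathscr R\circ\hat\theta=T_\theta\circ\mathscr R$ on $\Gamma_{pf}(\hat X)$. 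As $\mu_\lambda=\mathscr R_*\pi_\varkappa$, this gives $\mu_\lambda^\theta=(T_\theta)_*\mu_\lambda=\mathscr R_*\big((\hat\theta)_*\pi_\varkappa\big)$.

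\emph{Step 2: the transformed intensity.} Because $\hat\theta$ acts on the base space $\hat X$, the image $(\hat\theta)_*\pi_\varkappa$ is again a Poisson measure, with intensity $(\hat\theta)_*\varkappa$. Using \eqref{5}, \eqref{vytd7} and the substitution $u=\theta(x)s$, one computes $d\big((\hat\theta)_*\varkappa\big)(s,x)=l(\theta^{-1}(x)s)\,s^{-1}\,ds\,dx$, i.e.\ $(\hat\theta)_*\varkappa=\varrho\cdot\varkappa$ with $\varrho(s,x):=l(\theta^{-1}(x)s)/l(s)$. By \eqref{uyfr7r78r} the function $\varrho$ is everywhere strictly positive and finite, so $(\hat\theta)_*\varkappa$ and $\varkappa$ are mutually absolutely continuous. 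The key quantitative point is that
$$\varrho-1\in L^1(\varkappa),\qquad\text{i.e.}\qquad\int_X\int_{\Rp}\big|l(s)-l(\theta^{-1}(x)s)\big|\,s^{-1}\,ds\,dx<\infty .$$
Since $\varrho(s,x)=1$ whenever $x\notin\operatorname{supp}(\theta-1)=:\Lambda$ (a compact set), the outer integral runs over $\Lambda$ only, and for $x\in\Lambda$ we have $\theta^{-1}(x)\in[1/n,n]$ for a suitable $n$; the part of the inner integral over $(0,\varepsilon)$ is then dominated, uniformly in $x\in\Lambda$, by $\sup_{r\in[1/n,n]}\int_{(0,\varepsilon)}|l(rs)-l(s)|s^{-1}\,ds$, which is finite by \eqref{rtew64}, while the part over $[\varepsilon,\infty)$ is bounded, again uniformly in $x$, using \eqref{tyd6e6i} after the substitution $u=\theta^{-1}(x)s$. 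In particular $\varrho-1\in L^1(\varkappa)$ implies the Hellinger condition $\int_{\hat X}|\sqrt\varrho-1|^2\,d\varkappa<\infty$ (as $|\sqrt\varrho-1|^2\le|\varrho-1|$ pointwise), and by Campbell's formula it also implies that $\sum_{(s,x)\in\gamma}|\varrho(s,x)-1|<\infty$ for $\pi_\varkappa$-a.e.\ $\gamma$, so that the product $\prod_{(s,x)\in\gamma}\varrho(s,x)$ converges to a value in $(0,\infty)$ a.s.

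\emph{Step 3: conclusion.} This is exactly the hypothesis of the standard criterion for absolute continuity of Poisson measures, which yields $\pi_{\varrho\varkappa}\sim\pi_\varkappa$ with $\frac{d\pi_{\varrho\varkappa}}{d\pi_\varkappa}(\gamma)=\exp\big[\int_{\hat X}(1-\varrho)\,d\varkappa\big]\prod_{(s,x)\in\gamma}\varrho(s,x)$. (Alternatively, and self-containedly, one can prove this identity by checking that both sides have the same Fourier transform on $C_0(X)$: applying the complex exponential formula $\int_{\Gamma(\hat X)}\prod_{y\in\gamma}g(y)\,d\pi_\varkappa=\exp\int_{\hat X}(g-1)\,d\varkappa$ with $g(s,x)=e^{is\varphi(x)}\varrho(s,x)$ and the substitution $u=\theta^{-1}(x)s$ turns the left-hand side into $\exp[\int_X\int_{\Rp}(e^{iu(\theta\varphi)(x)}-1)\,d\lambda(u)\,dx]$, which by \eqref{tyeu64e37ord} equals $\int e^{i\la\theta\varphi,\eta\ra}\,d\mu_\lambda(\eta)=\int e^{i\la\varphi,\eta\ra}\,d\mu_\lambda^\theta(\eta)$, and $\mu_\lambda$ is determined by its Fourier transform.) Finally I transport this density through $\mathscr R$: let $R_\theta\colon\K(X)\to(0,\infty)$ be the $\mathcal B(\K(X))$-measurable function given by the right-hand side of \eqref{rtew6u4w6}; writing $\mathscr R^{-1}\eta=\{(s_x,x):x\in\tau(\eta)\}$, the identities $\int_{\hat X}(1-\varrho)\,d\varkappa=\int_X\int_{\Rp}(l(s)-l(\theta^{-1}(x)s))s^{-1}\,ds\,dx$ and $\prod_{(s,x)\in\mathscr R^{-1}\eta}\varrho(s,x)=\exp\int_X\log\big(l(\theta^{-1}(x)s_x)/l(s_x)\big)s_x^{-1}\,d\eta(x)$ show that $d\pi_{\varrho\varkappa}/d\pi_\varkappa=R_\theta\circ\mathscr R$, whence $\mu_\lambda^\theta=\mathscr R_*\pi_{\varrho\varkappa}=\mathscr R_*\big((R_\theta\circ\mathscr R)\,\pi_\varkappa\big)=R_\theta\cdot\mathscr R_*\pi_\varkappa=R_\theta\cdot\mu_\lambda$, i.e.\ \eqref{rtew6u4w6}; in particular $\mu_\lambda^\theta\sim\mu_\lambda$.

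I expect the main obstacle to be Step 2 — squeezing the global bound $\varrho-1\in L^1(\varkappa)$ out of the purely local, near-origin hypothesis \eqref{rtew64} together with \eqref{tyd6e6i}, uniformly over the compact support of $\theta$ and over the range $[1/n,n]$ of the scaling parameter $\theta^{-1}(x)$. Once that estimate (and the positivity \eqref{uyfr7r78r} giving equivalence of the intensities) is in place, everything else is a fairly mechanical transfer of the classical Poisson computation through the isomorphism $\mathscr R$.
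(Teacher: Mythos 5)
Your proposal is correct and follows essentially the same route as the paper: lift the action through $\mathscr R$ to the Poisson measure $\pi_\varkappa$, identify the transformed intensity $l(\theta^{-1}(x)s)s^{-1}\,ds\,dx$, verify the total-variation/$L^1$ bound by combining \eqref{rtew64} near the origin with \eqref{tyd6e6i} for the tail, uniformly over $\theta^{-1}(x)\in[1/n,n]$ on the compact support of $\theta-1$, and then invoke the Skorohod--Takahashi criterion for equivalence of Poisson measures before transporting the density back via $\mathscr R$. The only cosmetic differences are that the paper first proves the uniform-in-$r$ estimate on all of $\Rp$ (its display \eqref{e645we6we6}) and cites Skorohod/Takahashi explicitly, whereas you split the estimate inside the $x$-integral and sketch an optional Fourier-transform verification of the density formula.
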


\begin{proof}
We first note that, by \eqref{tyd6e6i} and  \eqref{rtew64},
\begin{align}
&\sup_{r\in[1/n,\,n]}\int_{\Rp}|l(rs)-l(s)|\,s^{-1}\,ds\notag\\
&\quad\le \sup_{r\in[1/n,\,n]}\bigg(\int_{(0,\varepsilon)}|l(rs)-l(s)|\,s^{-1}\,ds+\int_{[\varepsilon,\infty)}
l(rs)\,s^{-1}\,ds+\int_{[\varepsilon,\infty)}l(s)\,s^{-1}\,ds\bigg)\notag\\
&\quad\le \sup_{r\in[1/n,\,n]}\bigg(\int_{(0,\varepsilon)}|l(rs)-l(s)|\,s^{-1}\,ds\bigg)+2\int_{[\varepsilon/n,\,
\infty)}l(s)s^{-1}\,ds<\infty.\label{e645we6we6}
\end{align}

Recall the bijective mapping $\mathscr R:\Gamma_{pf}(\hat X)\to\K(X)$ and the Poisson measure $\pi_\varkappa$ on $\Gamma_{pf}(\hat X)$. Fix $\eta=\sum_i s_i\delta_{x_i}\in\mathbb K(X)$. Then
$\mathscr R^{-1}(\theta\eta)=\{(\theta(x_i)s_i,x_i)\}$.
Let $\pi_\varkappa^\theta$ denote the pushforward of the measure $\pi_\varkappa$ under the transformation
$\{(s_i,x_i)\}\mapsto \{(\theta(x_i)s_i,x_i)\}$.
Calculating the Fourier transform of the measure $\pi_\varkappa^\theta$, we easily see that
 $\pi_\varkappa^\theta$ is the Poisson measure over $\hat X$ with intensity measure
$$d\varkappa^\theta(s,x):=\frac{l(\theta^{-1}(x)s)}s\,ds\,dx.$$
Note that the measures $\varkappa^\theta$ and $\varkappa$ are equivalent, and
\begin{equation}\label{rtyed6ie7}
\frac{d\varkappa^\theta}{d\varkappa}(s,x)=\frac{l(\theta^{-1}(x)s)}{l(s)}>0.\end{equation}
By \eqref{e645we6we6}, we have, for the total variation of the signed measure $\varkappa^\theta-\varkappa$,
\begin{equation}\label{rtst56sw}
\int_{\hat X}\bigg|\frac{d\varkappa^\theta}{d\varkappa}-1\bigg|\,d\varkappa=\int_X\int_{\Rp}|l(\theta^{-1}(x)s)-l(s)|\,\frac1s\,ds\,dx<\infty.\end{equation}
Hence, we can apply Skorohod's result \cite{Sko} on the equivalence of Poisson measures, see also  \cite[Lemma~1]{Tak} where this result is extended to a rather general underlying space. Thus, by \eqref{rtyed6ie7} and \eqref{rtst56sw}, the Poisson measures $\pi_\varkappa^\theta$ and $\pi_\varkappa$ are equivalent and
\begin{equation}\label{tyrde6u4e76d}
\frac{d\pi_\varkappa^\theta}{d\pi_\varkappa}(\gamma)=\exp\left[
\left\langle\log\left(\frac{d\varkappa^\theta}{d\varkappa}\right),\gamma\right\rangle+\int_{\hat X}\left(1-\frac{d\varkappa^\theta}{d\varkappa}\right)d\varkappa
\right],\end{equation}
with $\log(\frac{d\varkappa^\theta}{d\varkappa})\in L^1(\hat X,\gamma)$ for $\pi_\varkappa$-a.a.\ $\gamma\in\Gamma(\hat X)$.
Noting that the measure $\mu_\lambda^\theta$ is the pushforward of $\pi_\varkappa^\theta$ under $\mathscr R$, we immediately get the statement of the theorem.
\end{proof}

Thus, for each measure $\mu_\lambda$ as in Theorem~\ref{guyr7r}, we get a quasi-regular representation of $C_0^\infty(X\to\Rp)$ in $L^2(\K(X),\mu_\lambda)$.

\begin{corollary}
\label{hufurti8tr8686} Assume that \eqref{vytd7}--\eqref{uyfr7r78r} hold. Further assume that, for some $\rho>0$, $l(s)=l_1(s)+l_2(s)$ for $s\in(0,\rho)$. Here, the function $l_1$ is differentiable on $(0,\rho)$
and for each $n\in\N$
\begin{equation}\label{se5w5w}\int
_{(0,\,\rho/n)}\sup_{u\in[s/n,\,sn]}|l_1'(u)|\,ds<\infty
,\end{equation}
 while
 $l_2\in L^1((0,\rho),s^{-1}\,ds)$. Then condition \eqref{rtew64} is satisfied, and so the conclusion of Theorem~\ref{guyr7r} holds.
\end{corollary}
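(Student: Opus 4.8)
The plan is to check that the stated hypotheses imply condition~\eqref{rtew64}; the final assertion of the Corollary then follows at once from Theorem~\ref{guyr7r}. Fix $n\in\N$ and choose any $\varepsilon\in(0,\rho/n)$. For $r\in[1/n,n]$ and $s\in(0,\varepsilon)$ one has $s\in(0,\rho)$ and also $rs\in(0,\rho)$ (since $rs\le n\varepsilon<\rho$ and $rs\ge s/n>0$), so both $l(s)=l_1(s)+l_2(s)$ and $l(rs)=l_1(rs)+l_2(rs)$, and hence
\[
|l(rs)-l(s)|\le |l_1(rs)-l_1(s)|+|l_2(rs)|+|l_2(s)|.
\]
It therefore suffices to bound the $s^{-1}\,ds$-integral over $(0,\varepsilon)$ of each of the three terms on the right, uniformly in $r\in[1/n,n]$.

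For the first term I would use the mean value theorem: since $l_1$ is differentiable (hence continuous) on $(0,\rho)$ and the closed segment joining $s$ and $rs$ lies in $(0,\rho)$, there is a point $\xi$ in that segment with $l_1(rs)-l_1(s)=l_1'(\xi)(rs-s)$. As $\xi\in[s/n,sn]$ and $|rs-s|=s|r-1|\le ns$, this gives $|l_1(rs)-l_1(s)|\le ns\,\sup_{u\in[s/n,sn]}|l_1'(u)|$, whence
\[
\int_{(0,\varepsilon)}|l_1(rs)-l_1(s)|\,s^{-1}\,ds\le n\int_{(0,\rho/n)}\sup_{u\in[s/n,sn]}|l_1'(u)|\,ds<\infty
\]
by~\eqref{se5w5w}, with a bound independent of $r$. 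For the term $|l_2(s)|$ one simply estimates $\int_{(0,\varepsilon)}|l_2(s)|\,s^{-1}\,ds\le\int_{(0,\rho)}|l_2(s)|\,s^{-1}\,ds<\infty$, since $l_2\in L^1((0,\rho),s^{-1}\,ds)$. For the term $|l_2(rs)|$ the point is that no pointwise smallness of $l_2$ is available, so instead I would change variables $t=rs$ (noting $s^{-1}\,ds=t^{-1}\,dt$), obtaining
\[
\int_{(0,\varepsilon)}|l_2(rs)|\,s^{-1}\,ds=\int_{(0,r\varepsilon)}|l_2(t)|\,t^{-1}\,dt\le\int_{(0,\rho)}|l_2(t)|\,t^{-1}\,dt<\infty,
\]
again uniformly in $r$, since $r\varepsilon\le n\varepsilon<\rho$. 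Adding the three estimates yields $\sup_{r\in[1/n,n]}\int_{(0,\varepsilon)}|l(rs)-l(s)|\,s^{-1}\,ds<\infty$, which is exactly \eqref{rtew64}, and then Theorem~\ref{guyr7r} applies.

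The argument is essentially routine, so there is no genuine obstacle; the two points that need a little care are (a) shrinking $\varepsilon$ with $n$ so that $rs$ stays inside $(0,\rho)$, where the decomposition $l=l_1+l_2$ is available, for the whole range $r\in[1/n,n]$, $s\in(0,\varepsilon)$, and (b) treating the $l_2(rs)$ contribution by a change of variables rather than a crude supremum bound, since $l_2$ is only assumed integrable against $s^{-1}\,ds$ and need not be small pointwise near the origin.
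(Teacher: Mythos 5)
Your proof is correct and follows essentially the same route as the paper's: Taylor's formula (in your case the mean value theorem) together with \eqref{se5w5w} handles $l_1$, while the $l_2$ terms are controlled by the triangle inequality and the scale invariance of $s^{-1}\,ds$ (the change of variables $t=rs$), which is exactly the argument the paper invokes by referring back to \eqref{e645we6we6}. The two points you flag for care (shrinking $\varepsilon$ below $\rho/n$ and treating $l_2(rs)$ by substitution rather than a pointwise bound) are indeed the only details the paper leaves implicit.
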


\begin{remark}
Note that condition \eqref{se5w5w} is  stronger than $\int_{(0,\rho)}|l_1'(s)|\,ds<\infty$.
\end{remark}

\begin{proof}[Proof of Corollary~\ref{hufurti8tr8686}] We only need to check that condition \eqref{rtew64} is satisfied. But for the function $l_1$, the fulfillment of such a condition easily follows from Taylor's formula and \eqref{se5w5w}, while for the function $l_2$, the proof is similar to the arguments used in  \eqref{e645we6we6}.
\end{proof}

\begin{example} In the case of the gamma measure, the function $l(s)=l_1(s)=e^{-s}$ clearly satisfies
the conditions of Corollary~\ref{hufurti8tr8686}.
Formula~\eqref{rtew6u4w6} now becomes
$$
\frac{d\mu_\lambda^\theta}{d\mu_\lambda}(\eta)=\exp\bigg[\int_X(1-\theta^{-1}(x))\,d\eta(x)-\int_X \log(\theta(x))\,dx\bigg],
$$
compare with \cite[Theorem~3.1]{TsVY}.
\end{example}

\begin{example}\label{tyre56i54ei}
By analogy with  \cite[Comment 2 to Theorem~1]{LS},
let us  consider a function $l(s)$  such that, for some $\rho\in(0,1)$, and $\alpha>0$
\begin{equation}\label{utyfr65e6afxt}
l(s)=(-\log s)^{-\alpha},\quad s\in(0,\rho),\end{equation}
\eqref{uyfr7r78r} holds and  $\int_{[\rho,\infty)}l(s)s^{-1}\,ds<\infty$. Since  $l(s)$ is bounded on $(0,\rho)$, we get $\int_{(0,\rho)}l(s)\,ds<\infty$.
 For each $n\in\N$,
 \begin{align*}
\int_{(0,\,\rho/n)}\,\sup_{u\in[s/n,\,sn]}|l'(u)|\,ds
&= \int_{(0,\,\rho/n)}\,\sup_{u\in[s/n,\,sn]}\big(\alpha(-\log u)^{-\alpha-1}\,u^{-1}\big) \,ds\\
&\le \alpha n \int_{(0,\,\rho/n)}(-\log(sn))^{-\alpha-1}\,s^{-1}\,ds<\infty.
\end{align*}
 Hence, $l$ satisfies the assumptions of Corollary~\ref{hufurti8tr8686}. Note also that, for $\alpha\in(0,1]$, we get $\lambda(\Rp)=\infty$.
\end{example}

\subsection{Partial quasi-invariance with respect to $\mathfrak G$}

Analogously to the proof of Theorem~\ref{guyr7r}, we conclude that  a measure $\mu_\lambda$ is quasi-invariant with respect to the action of $\operatorname{Diff}_0(X)$ if and only if, for each $\psi\in \operatorname{Diff}_0(X)$, we have $\sqrt{J_\psi(x)}-1\in L^2(\hat X, d\lambda(s)\,dx)$, where $J_\psi(x)$ is the Jacobian determinant of $\psi$ (with respect to the Riemannian volume $dx$). Obviously, this condition is satisfied if and only if $\lambda(\Rp)<\infty$. So, in the case where \eqref{guf7r} holds, $\mu_\lambda$ is not quasi-invariant with respect to  $\operatorname{Diff}_0(X)$ and $\mathfrak G$. Because of this, we will now introduce a weaker notion of partial quasi-invariance.

\begin{definition}\label{uit886} Let $(\Omega,\mathscr F,P)$ be a probability space, and let $\mathscr G$ be a group which acts on $\Omega$. We will say that the probability measure $P$ is {\it  partially quasi-invariant with respect to transformations $g\in \mathscr G$} if there exists a filtration $(\mathscr F_n)_{n=1}^\infty$ such that:

\begin{itemize}
\item[(a)] $\mathscr F$ is the minimal $\sigma$-algebra on $\Omega$ which contains all $\mathscr F_n$, $n\in\N$;

\item[(b)] For each $g\in\mathscr G$ and $n\in\mathbb N$, there exists $m\in\mathbb N$ such that $g$ maps $\mathscr F_n$ into $\mathscr F_m$\,;

\item[(c)] For any $n\in\N$ and  $g\in \mathscr G$,
there exists a measurable function $R_g^{(n)}:\Omega\to[0,\infty]$ such that, for each $F:\Omega\to [0,\infty]$ which is $\mathscr F_n$-measurable,
$$\int_\Omega F(g\omega)\,dP(\omega)=\int_\Omega F(\omega)\,R_g^{(n)}(\omega)\,dP(\omega).$$
\end{itemize}
\end{definition}

\begin{remark}Note that, if $\mathscr F_n$ is a proper subset  of $\mathscr F$, the function $R_g^{(n)}$ is not uniquely defined. It becomes unique ($P$-almost surely) if we additionally require $R_g^{(n)}$ to be $\mathscr F_n$-measurable.
\end{remark}

\begin{remark}Clearly, if a probability measure $P$ is quasi-invariant with respect to $\mathscr G$, then it is partially quasi-invariant. Indeed, we may choose $\mathscr F_n=\mathscr F$ for all $n\in\mathbb N$, and set $R_g=R_g^{(n)}=\frac{dP^g}{dP}$, where $P^g$ is the pushforward of $P$ under $g\in G$.\end{remark}

\begin{remark} In the case where a probability measure $P$ is only partially quasi-invariant with respect to a Lie group $\mathscr G$, we have no true unitary representation of  $\mathscr G$, and, consequently, no representation of the Lie algebra and its universal enveloping algebra. However, we may have an integration by parts formula for the measure $P$ in a weak form, see  Section~\ref{tyd6rte} below.
\end{remark}

\begin{theorem}\label{yur767}
Let \eqref{guf7r} hold and let the conditions of Theorem~\ref{guyr7r} be satisfied. Then, the measure $\mu_\lambda$ is partially quasi-invariant with respect to the action of the group $\mathfrak G$.
\end{theorem}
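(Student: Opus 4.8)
The plan is to produce an explicit filtration $(\mathscr F_n)_{n=1}^\infty$ on $\big(\K(X),\mathcal B(\K(X))\big)$ and verify the three conditions of Definition~\ref{uit886}, the last one being the real issue. First I would set, for $\eta=\sum_{x\in\tau(\eta)}s_x\delta_x\in\K(X)$ and $n\in\N$,
$$\eta^{(n)}:=\sum_{x\in\tau(\eta),\ s_x\ge 1/n}s_x\,\delta_x,\qquad \mathscr F_n:=\sigma\big(\eta\mapsto\eta^{(n)}\big),$$
so that $\mathscr F_n$ records only the atoms of $\eta$ of mass at least $1/n$. Since such an atom is a fortiori an atom of mass $\ge 1/(n+1)$, one has $\eta^{(n)}=(\eta^{(n+1)})^{(n)}$, hence $(\mathscr F_n)$ is increasing. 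Condition (a) holds because $\la\varphi,\eta^{(n)}\ra\uparrow\la\varphi,\eta\ra$ for every $0\le\varphi\in C_0(X)$ (atoms are added one family at a time, and $\eta$ is finite on compacta), while the maps $\eta\mapsto\la\varphi,\eta\ra$, $\varphi\in C_0(X)$, generate $\mathcal B(\K(X))$. For condition (b), fix $g=(\psi,\theta)\in\mathfrak G$ and put $c:=\sup_X\theta\vee\sup_X\theta^{-1}$, which lies in $[1,\infty)$ since $\theta$ is continuous, strictly positive and identically $1$ off a compact set; if an atom $(s_x,x)$ of $\eta$ produces under $g$ an atom of mass $\ge 1/n$, then necessarily $s_x\ge 1/(cn)$, so $(g\eta)^{(n)}$ is a measurable function of $\eta^{(m)}$ with $m:=\lceil cn\rceil$. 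For a pure diffeomorphism ($\theta\equiv 1$) one may take $m=n$, because $\psi^*$ preserves atom masses and $(\psi^*\eta)^{(n)}=\psi^*(\eta^{(n)})$.

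The substantial point is condition (c). Using the factorization $(\psi,\theta)=(\operatorname{id},\theta)(\psi,\operatorname{id})$, which gives $g\eta=\theta\cdot\psi^*\eta$, I would reduce it to the two ``pure'' cases. For a pure multiplier $(\operatorname{id},\theta)$, Theorem~\ref{guyr7r} already provides full quasi-invariance, hence (c) with $R^{(n)}_{(\operatorname{id},\theta)}=\frac{d\mu_\lambda^\theta}{d\mu_\lambda}$ for all $n$. Granting (c) for the pure diffeomorphism $(\psi,\operatorname{id})$ (next paragraph), one proceeds as follows: for $\mathscr F_n$-measurable $F\ge 0$ write $F(g\eta)=F_\theta(\psi^*\eta)$ with $F_\theta(\zeta):=F(\theta\cdot\zeta)$, note that $F_\theta$ is $\mathscr F_m$-measurable, and apply (c) for $(\psi,\operatorname{id})$ at level $m$ with an $\mathscr F_m$-measurable version of $R^{(m)}_{(\psi,\operatorname{id})}$; since the latter is then of the form $\eta\mapsto S(\theta\eta)$ for a suitable $S$, the resulting integral $\int_{\K(X)}F(\theta\eta)\,S(\theta\eta)\,d\mu_\lambda(\eta)$ is handled by Theorem~\ref{guyr7r}, yielding (c) for $g$ with $R^{(n)}_g=S\cdot\frac{d\mu_\lambda^\theta}{d\mu_\lambda}$. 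All densities produced this way are strictly positive, thanks to assumption \eqref{uyfr7r78r}.

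It remains to prove (c) for a pure diffeomorphism $\psi$, with $K_\psi$ a compact set off which $\psi=\operatorname{id}$; here the key observation is that, even though $\tau(\eta)$ is $\mu_\lambda$-a.s.\ dense in $X$, the atoms of $\eta$ that matter — those in $K_\psi$ of mass $\ge 1/n$ — form a \emph{finite} point process, since in the Poisson realization $\mu_\lambda=\mathscr R_*\pi_\varkappa$ one has $\varkappa([1/n,\infty)\times K_\psi)=\lambda([1/n,\infty))\,|K_\psi|<\infty$ (indeed $\lambda([1/n,\infty))<\infty$ by \eqref{uyr76r586trgfrr}, and $|K_\psi|<\infty$). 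Concretely, I would split $[1/n,\infty)\times X=\big([1/n,\infty)\times K_\psi\big)\sqcup\big([1/n,\infty)\times(X\setminus K_\psi)\big)$ and use independence of $\pi_\varkappa$ on disjoint sets: the restrictions of a Poisson configuration $\gamma$ to these two pieces are independent, the first with finite intensity $\varkappa_0:=\varkappa|_{[1/n,\infty)\times K_\psi}$. The transformation $(s,x)\mapsto(s,\psi(x))$ is the identity on the second piece and maps the first onto itself (because $\psi(K_\psi)=K_\psi$), pushing $\pi_{\varkappa_0}$ forward to the Poisson measure with intensity $J_\psi(\psi^{-1}(x))^{-1}\,d\varkappa_0(s,x)$; since both intensities are finite and equivalent, with strictly positive density $J_\psi(\psi^{-1}(x))^{-1}$, the finite-intensity version of the Poisson absolute-continuity formula used in the proof of Theorem~\ref{guyr7r} applies, and a change of variables $x\mapsto\psi^{-1}(x)$ shows that its normalizing constant vanishes (because $\int_{K_\psi}J_\psi\,dx=|K_\psi|$). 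Undoing the conditioning — that is, reassembling $\eta^{(n)}$ from $\gamma\cap([1/n,\infty)\times K_\psi)$ and $\gamma\cap([1/n,\infty)\times(X\setminus K_\psi))$ — then yields (c) for $(\psi,\operatorname{id})$ with
$$R^{(n)}_{(\psi,\operatorname{id})}(\eta)=\prod_{x\in\tau(\eta),\ s_x\ge 1/n}J_\psi\big(\psi^{-1}(x)\big)^{-1},$$
a finite product, since $J_\psi\equiv 1$ off $K_\psi$.

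The main obstacle is precisely this reduction to a finite point process: one has to recognize that it is the mass cutoff $s_x\ge 1/n$ — and nothing about the geometry of $X$ — that makes only finitely many atoms of $\eta$ be moved by a compactly supported $\psi$, after which the classical theory of equivalence of Poisson measures takes over, producing the extra Jacobian factor. The remaining items — measurability of $\eta\mapsto\eta^{(n)}$, the bookkeeping identities $(\psi^*\eta)^{(n)}=\psi^*(\eta^{(n)})$ and its analogue for the multiplier, and the explicit assembly of $R^{(n)}_g$ — are routine.
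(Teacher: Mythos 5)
Your proposal is correct and follows essentially the same route as the paper: the same filtration by the mass cutoff $s_x\ge 1/n$, the same factorization of $g=(\psi,\theta)$ into a diffeomorphism part and a multiplier part handled by Theorem~\ref{guyr7r}, and the same key observation that on $\mathcal B_n(\K(X))$ only the finitely many atoms of mass $\ge 1/n$ in the compact support of $\psi$ are moved, yielding a finite Jacobian product as density. The only differences are cosmetic: you spell out the Poisson-splitting justification that the paper treats as ``easily seen,'' and your Jacobian factor $J_\psi(\psi^{-1}(x))^{-1}$ versus the paper's $J_\psi(x)$ (and your choice of $m$ via $\sup\theta$ versus the paper's $\inf\theta$) reflect bookkeeping conventions rather than substance.
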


\begin{proof} The Borel $\sigma$-algebra $\mathcal B(\Gamma_{pf}(\hat X))$ may be identified as the minimal $\sigma$-algebra on $\Gamma_{pf}(\hat X)$ with respect to which each mapping of the following form is measurable:
\begin{equation}\label{huguyl8t8}
\Gamma_{pf}(\hat X)\ni\gamma\mapsto |\gamma\cap\Lambda|,\quad \Lambda\in\mathcal B_0(\hat X),
\end{equation}
see e.g.\ Section~1.1, in particular Lemma~1.4, in \cite{Kal}. For each $n\in\N$, we denote by $\mathcal B_n(\Gamma_{pf}(\hat X))$ the minimal $\sigma$-algebra on $\Gamma_{pf}(\hat X)$ with respect to which each mapping of the form \eqref{huguyl8t8} is measurable, with $\Lambda$ being a  subset of  $[1/n,\infty)\times X $. Clearly, $(\mathcal B_n(\Gamma_{pf}(\hat X)))_{n=1}^\infty$ is a filtration and  $\mathcal B(\Gamma_{pf}(\hat X))$ is the minimal $\sigma$-algebra on $\Gamma_{pf}(\hat X)$ which contains all $\mathcal B_n(\Gamma_{pf}(\hat X))$. Next, we  denote by $\mathcal B_n(\K(X))$ the image of $\mathcal B_n(\Gamma_{pf}(\hat X))$
under the mapping $\mathscr R$.
By \cite[Theorem~6.2]{HKPR},
$$
\mathcal B(\mathbb K(X))=\big\{\mathscr R A\mid A\in\mathcal B(\Gamma_{pf}(\hat X))\big\}.$$
Hence,  $(\mathcal B_n(\K(X)))_{n=1}^\infty$ is a filtration and $\mathcal B(\K(X))$ is the minimal $\sigma$-algebra on $\K(X)$ which contains all $\mathcal B_n(\K(X))$.

Let $n\in\N$ and $g=(\psi,\theta)\in\mathfrak G$. Choose $m\in\mathbb N$ such that $\frac1m\le\frac1n\,\inf_{x\in X}\theta(x)$. Then $g$ maps $\mathcal B_n(\K(X))$ into $\mathcal B_m(\K(X))$. Furthermore, let $F:\K(X)\to[0,\infty]$ be measurable with respect to  $\mathcal B_n(\K(X))$. By \eqref{uy87r8o7f},
\begin{equation}\label{buig8it8}
 \int_{\K(X)} F(g\eta)\,d\mu_\lambda(\eta)=\int_{\K(X)}F(\theta\cdot\eta)\,d\mu^\psi_\lambda(\eta) \end{equation}
where $\mu^\psi_\lambda$ is the pushforward of $\mu_\lambda$ under $\psi^*$. The function
$\eta\mapsto F(\theta\cdot\eta)$ is $\mathcal B_m(\K(X))$-measurable. As easily seen, $\psi^*$ maps $\mathcal B_m(\K(X))$ into itself, the restriction of the measure $\mu_\lambda^\psi$ to $\mathcal B_m(\K(X))$  is absolutely continuous with respect to the restriction of $\mu_\lambda$ to $\mathcal B_m(\K(X))$, and the corresponding density is given by $\prod_{x\in\tau(\eta):\, s_x\ge\frac1m}J_\psi(x)$.
Hence, by \eqref{buig8it8} and Theorem~\ref{guyr7r},
\begin{align*}  \int_{\K(X)} F(g\eta)\,d\mu_\lambda(\eta)&=\int_{\K(X)} F(\theta\cdot \eta)\,\prod_{x\in\tau(\eta):\, s_x\ge\frac1m}J_\psi(x)\,d\mu_\lambda(\eta)\\
&=\int_{\K(X)} F(\eta)\,\prod_{x\in\tau(\eta):\, s_x\ge\frac{\theta(x)}m}J_\psi(x)\,d\mu^\theta_\lambda(\eta)\\
&=\int_{\K(X)} F(\eta) R_g^{(n)}(\eta)\,d\mu_\lambda(\eta),
\end{align*}
where
\begin{equation}\label{hjvfut7r678}
 R_g^{(n)}(\eta)= \prod_{x\in\tau(\eta):\, s_x\ge\frac{\theta(x)}m}J_\psi(x)\cdot\frac{d\mu_\lambda^\theta}{d\mu_\lambda}(\eta),\end{equation}
with $\frac{d\mu_\lambda^\theta}{d\mu_\lambda}(\eta)$ being given by \eqref{rtew6u4w6}.
\end{proof}

\section{Integration by parts}\label{tyd6rte}
Let us first make a general observation about partial quasi-invariance.
 Assume that $\mathscr G$ is a Lie group which acts on $\Omega$ and assume that a probability measure $P$ on $\Omega$ is partially quasi-invariant with respect to $\mathscr G$. Let $\mathbf g$ be the Lie algebra of $\mathscr G$. Fix any $v\in\mathbf g$ and  let $(g^v_t)_{t\in\mathbb R}$ be the corresponding one-parameter subgroup of $\mathscr G$. For a function $F:\Omega\to\R$, we may now introduce a derivative of $F$ in direction $v$ by
 $\nabla^{\mathscr G}_v F(\omega):=\frac{d}{dt}\big|_{t=0}F(g^v_t\omega)$. Fix any $n\in\mathbb N$, and assume that there exists $m\ge n$ such that,  for all $t$ from a neighborhood of zero,
$g_t^v$ maps $\mathscr F_n$ to $\mathscr F_m$. Then, at least heuristically, we get, for  $\mathscr F_n$-measurable, differentiable functions $F,G:\Omega\to\R$:
\begin{align}
&\int_\Omega \nabla^{\mathscr G}_v F(\omega)G(\omega)\,dP(\omega)=\frac d{dt}\Big|_{t=0}\int_\Omega F(g^v_t\omega)G(g^v_tg^v_{-t}\,\omega)\,dP(\omega)\notag\\
&\quad=\frac d{dt}\Big|_{t=0}\int_\Omega F(\omega) G(g^v_{-t}\,\omega)R^{(m)}_{g_t^v}(\omega)\,dP(\omega)\notag\\
&\quad =-\int_\Omega F(\omega)\nabla^{\mathscr G}_v G(\omega)\,dP(\omega)-\int_\Omega F(\omega) G(\omega) B_v^{(m)}(\omega)\,dP(\omega),\label{huvfrt7r75}
\end{align}
where
\begin{equation}\label{uyfr7r7r}B_v^{(m)}(\omega):=-\frac{d}{dt}\Big|_{t=0} R^{(m)}_{g_t^v}(\omega).
\end{equation}
Note that the function $B_v^{(m)}$ in formula \eqref{huvfrt7r75} can be replaced by the conditional expectation of $B_v^{(m)}$ with respect to the $\sigma$-algebra $\mathscr F_m$, which is equal to $B_v^{(n)}$.  Thus, we get an integration by parts formula in a weak form. We will now rigorously derive such a formula in the case of the group $\mathfrak G$.

The Lie algebra of the Lie group $\operatorname{Diff}_0(X)$ is the space $\operatorname{Vect}_0(X)$ consisting of all $C^\infty$-vector fields (i.e., smooth sections of   $T(X)$\,) which have compact support. For $v\in \operatorname{Vect}_0(X)$, let $(\psi_t^v)_{t\in\R}$ be the corresponding  one-parameter subgroup of $\operatorname{Diff}_0(X)$,  see e.g.\ \cite[Chap.~IV, Sect.~6 and 7]{Boo} and \cite[subsec.~3.1]{AKR}. The corresponding derivative of a function $F:\M(X)\to\R$ in direction $v$ will be denoted by $\nabla^{\M}_vF(\eta)$.

As the Lie algebra of $C_0(X\to\Rp)$ we may take the space $C_0(X)$. For each $h\in C_0(X)$, the corresponding one-parameter subgroup of $C_0(X\to\Rp)$ is given by $(e^{th})_{t\in\R}$. The corresponding derivative of a function $F:\M(X)\to\R$ in direction $h$ will be denoted by $\nabla^{\M}_hF(\eta)$.

Next, $\mathfrak g:= \operatorname{Vect}_0(X)\times C_0(X) $ can be thought of as a Lie algebra that corresponds to the Lie group $\mathfrak G$. For an arbitrary $(v,h)\in\mathfrak g$, we may consider the curve $\{(\psi_t^v,e^{th}),\,t\in\R\}$ in $\mathfrak G$. (Note that this curve does not form a subgroup of $\mathfrak G$.) The corresponding derivative of a function $F:\M(X)\to\R$ in direction $(v,h)$ will be denoted by $\nabla^{\M}_{(v,h)}F(\eta)$. We clearly have:
\begin{equation}\label{ytdtr6e56i}
(\nabla_{(v,h)}^{\mathbb M}F)(\eta)=(\nabla_{v}^{\M}F)(\eta)+(\nabla_{h}^{\M}F)(\eta)\end{equation}
(at least, under reasonable assumptions on $F$). Note that, in the above definitions we may take a function $F:\K(X)\to\R$.

Let us now introduce a set of `test' functions on $\K(X)$ such that each function $F$ from this set is measurable with respect to $\mathcal B_n(\K(X))$ for some $n\in\N$.
Denote by $C_0^\infty(\hat X)$ the space of all infinitely differentiable functions on $\hat X$ which have compact support in $\hat X$.  We denote by $\FCC$ the set of all cylinder functions $G:\Gamma(\hat X)\to\R$ of the form
\begin{equation}\label{giyfci}
G(\gamma)=g(\la \varphi_1,\gamma\ra,\dots,\la \varphi_N,\gamma\ra),
\quad\gamma\in\Gamma(\hat X),
\end{equation}
where $g\in C_{b}^\infty(\R^N)$, $\varphi_1\,\dots,\varphi_N\in C_0^\infty(\hat X)$, and $N\in\N$.  Here $C_{b}^\infty(\R^N)$ is the set of all infinitely differentiable functions on $\R^N$ which, together with all their derivatives, are bounded.
Next, we define
$$\FCK
 :=\big\{F:\K(X)\to\R\mid F(\eta)=G(\mathscr R^{-1}\eta)\text{ for some }G\in\FCC\big\}.$$
For $\varphi\in C_0^\infty(\hat X)$ and $\eta\in\K(X)$, we denote
\begin{equation}\label{tdrytd6}
 \langle\!\langle \varphi,\eta\rangle\!\rangle:=\la\varphi,\mathscr R^{-1}\eta\ra=\sum_{x\in\tau(\eta)} \varphi(s_x,x)=\int_X \varphi(s_x,x)\,d\tilde \eta(x).\end{equation}
Here $d\tilde\eta(x):=\frac1{s_x}\,d\eta(x)$, i.e., $\tilde\eta=\sum_{x\in\tau(\eta)}\delta_x$. (Note that $\tilde\eta$ is not a Radon measure.)
Then, each function $F\in\FCK$ has the form
\begin{equation}\label{fty6ed6u45} F(\eta)=g\big(\langle\!\langle \varphi_1,\eta\rangle\!\rangle,\dots,\langle\!\langle \varphi_N,\eta\rangle\!\rangle\big),\quad \eta\in\K(X),\end{equation}
with $g,\varphi_1\,\dots,\varphi_N$ and $N$ as in \eqref{giyfci}.
Let  $n\in\mathbb N$ be such that the support of each $\varphi_i$
($i=1,\dots,N$) is a subset of $[1/n,\infty)\times X$. Then the function $F$ is $\mathcal B_n(\K(X))$-measurable.

\begin{theorem}\label{tuyr75e464w} Assume that \eqref{guf7r}--\eqref{uyfr7r78r}  hold.
Assume that the function $l$ is continuously differentiable on $\Rp$ and $l'\in L^1(\Rp,ds)$.  Assume that $F,G\in\FCK$ are measurable with respect to $\mathcal B_n(\K(X))$.
 Then, for each $(v,h)\in\mathfrak g$,
\begin{align}
\int_{\K(X)}(\nabla^{\M}_{(v,h)}F)(\eta)G(\eta)\,d\mu_\lambda(\eta)&=- \int_{\K(X)}F(\eta)(\nabla^{\M}_{(v,h)}G)(\eta)\,d\mu_\lambda(\eta)\notag\\
&\quad-\int_{\K(X)}F(\eta)G(\eta) B^{(n)}_{(v,h)}(\eta)\,d\mu_\lambda(\eta),\label{tyre675e4}\end{align}
where
 \begin{align}
 B^{(n)}_{(v,h)}&=B^{(n)}_v+B_h,\notag\\
 B^{(n)}_v(\eta)&=\sum_{x\in\tau(\eta):\, s_x\ge1/n}\operatorname{div}^X v(x),\notag\\
 B_h(\eta)&=\int_X \frac{l'(s_x)}{l(s_x)}\,
 h(x)\,d\eta(x)+l(0)\int_X h(x)\,dx .\label{huf7tyr}
 \end{align}
Here, $l(0):=\lim_{s\to0}l(s)$ and $\operatorname{div}^X v(x)$ denotes the divergence of $v(x)$ on $X$.
 \end{theorem}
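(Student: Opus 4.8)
The plan is to deduce \eqref{tyre675e4}--\eqref{huf7tyr} by splitting, via the additivity \eqref{ytdtr6e56i} (and the corresponding decomposition $B^{(n)}_{(v,h)}=B^{(n)}_v+B_h$), into the two pure directions $(v,0)$ and $(0,h)$, and in each of them to make rigorous the substitution computation sketched in \eqref{huvfrt7r75}; the difference between the two cases is only which quasi-invariance one invokes — the partial one of Theorem~\ref{yur767} for $(v,0)$, the genuine one of Theorem~\ref{guyr7r} for $(0,h)$.

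\emph{The diffeomorphism direction $(v,0)$.} Here the flow $(\psi^v_t)_{t\in\R}\subset\operatorname{Diff}_0(X)$ acts on $\K(X)$ by pushforward and does not change the masses $s_x$, so $(\psi^v_t)^*$ maps $\mathcal B_n(\K(X))$ into itself; thus in Definition~\ref{uit886}(b) one may take $m=n$, and by \eqref{hjvfut7r678} with $\theta\equiv1$ the cocycle is $R^{(n)}_{\psi^v_t}(\eta)=\prod_{x\in\tau(\eta):\,s_x\ge1/n}J_{\psi^v_t}(x)$, a product with only finitely many factors $\ne 1$ (for $\mu_\lambda$-a.a.\ $\eta$ the set $\{x\in\tau(\eta):s_x\ge1/n\}$ is locally finite because $\eta$ is Radon, and $\psi^v_t=\mathrm{id}$ off the compact set $\operatorname{supp}v$). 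Since $F$ is $\mathcal B_n(\K(X))$-measurable and $(\psi^v_{-t})^*$ preserves $\mathcal B_n(\K(X))$, the function $\eta\mapsto F(\eta)G((\psi^v_{-t})^*\eta)$ is $\mathcal B_n(\K(X))$-measurable, and Definition~\ref{uit886}(c) (extended to bounded integrands by linearity) gives
\[
\int_{\K(X)}F((\psi^v_t)^*\eta)\,G(\eta)\,d\mu_\lambda(\eta)=\int_{\K(X)}F(\eta)\,G((\psi^v_{-t})^*\eta)\,R^{(n)}_{\psi^v_t}(\eta)\,d\mu_\lambda(\eta).
\]
Differentiating both sides at $t=0$, using $R^{(n)}_{\psi^v_0}\equiv1$, $\frac{d}{dt}\big|_{t=0}G((\psi^v_{-t})^*\eta)=-(\nabla^{\M}_vG)(\eta)$ and the Liouville identity $\frac{d}{dt}\big|_{t=0}J_{\psi^v_t}(x)=-\operatorname{div}^Xv(x)$ (the minus sign coming from the inverse map $(\psi^v_t)^{-1}=\psi^v_{-t}$ that enters the cocycle), one obtains \eqref{tyre675e4} with $B^{(n)}_v(\eta)=\sum_{x\in\tau(\eta):\,s_x\ge1/n}\operatorname{div}^Xv(x)$.

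\emph{The multiplicative direction $(0,h)$.} Now the $\sigma$-algebras $\mathcal B_n(\K(X))$ are no longer invariant, but partial quasi-invariance is not needed: by Theorem~\ref{guyr7r} the measure $\mu_\lambda$ is quasi-invariant under $e^{th}\in C_0(X\to\Rp)$ with density \eqref{rtew6u4w6}. Since $C_0(X\to\Rp)$ is abelian, $e^{-th}(e^{th}\eta)=\eta$, and the change-of-variables formula of Theorem~\ref{guyr7r} applied to $\Phi(\eta):=F(\eta)G(e^{-th}\eta)$ gives
\[
\int_{\K(X)}F(e^{th}\eta)\,G(\eta)\,d\mu_\lambda(\eta)=\int_{\K(X)}F(\eta)\,G(e^{-th}\eta)\,\frac{d\mu^{e^{th}}_\lambda}{d\mu_\lambda}(\eta)\,d\mu_\lambda(\eta),
\]
and differentiation at $t=0$ produces $-\int F(\nabla^{\M}_hG)\,d\mu_\lambda-\int FG\,B_h\,d\mu_\lambda$ with $B_h=-\frac{d}{dt}\big|_{t=0}\frac{d\mu^{e^{th}}_\lambda}{d\mu_\lambda}$. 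Differentiating \eqref{rtew6u4w6} in $t$ (so $\theta=e^{th}$, $\theta^{-1}=e^{-th}$), the first integral contributes $\int_X\frac{l'(s_x)}{l(s_x)}h(x)\,d\eta(x)$ and the second contributes $l(0)\int_Xh(x)\,dx$, where one uses $\int_{\Rp}l'(s)\,ds=\lim_{s\to\infty}l(s)-l(0)=-l(0)$, the vanishing at $+\infty$ being forced by $l'\in L^1(\Rp)$ together with $\int_1^\infty l(s)s^{-1}\,ds<\infty$ (cf.\ \eqref{tyd6e6i}). This is \eqref{huf7tyr} in the case $v=0$, and adding the two contributions gives $B^{(n)}_{(v,h)}=B^{(n)}_v+B_h$.

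The main obstacle is the rigorous justification of the two differentiations under the integral sign. For each fixed $\eta$ the maps $t\mapsto F(e^{th}\eta)$, $t\mapsto G((\psi^v_{-t})^*\eta)$ and $t\mapsto\frac{d\mu^{e^{th}}_\lambda}{d\mu_\lambda}(\eta)$ are smooth near $t=0$ — the first two because, $\mu_\lambda$-a.s., each statistic $\langle\!\langle\varphi_i,\cdot\rangle\!\rangle$ evaluated along the curve is a finite sum of smooth functions of $t$, the supports $\operatorname{supp}\varphi_i$ being compact in $\hat X$; the last because $l\in C^1$ — and one needs a $\mu_\lambda$-integrable bound, uniform for $t$ in a neighbourhood of $0$, on their $t$-derivatives times the bounded factors $F,G$. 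Such bounds follow from the boundedness of $g$ and its derivatives, the compactness of $\operatorname{supp}v$, $\operatorname{supp}h$ and the $\operatorname{supp}\varphi_i$, the finite-local-mass property of $\eta\in\K(X)$ (which controls the sums $\sum_{x\in\tau(\eta)\cap\operatorname{supp}h}s_x$ and the cardinalities $\#\{x\in\tau(\eta)\cap\Lambda:s_x\ge1/n\}$), the integrability hypotheses on $l$ and $l'$, and the fact that $\eta\mapsto\eta(\Lambda)$ has moments of all orders under $\mu_\lambda$ for compact $\Lambda\subset X$. Since $m=n$ already in the $(v,0)$ step and no filtration intervenes in the $(0,h)$ step, no passage to a conditional expectation of the logarithmic derivative is needed, and one reads off $B^{(n)}_{(v,h)}$ directly as in \eqref{huf7tyr}.
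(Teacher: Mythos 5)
Your proposal follows the route that the paper explicitly declines to take: differentiating the (partial) quasi-invariance cocycles in $t$ at $t=0$, as in the heuristic computation \eqref{huvfrt7r75}--\eqref{uyfr7r7r}. The paper states that this can be made rigorous for special measures such as the gamma measure, but that "in the general case, such a justification seems to be quite a difficult problem", and it therefore proves Theorem~\ref{tuyr75e464w} by a different mechanism: it writes $\nabla^{\M}_{(v,h)}F$ as a sum over atoms via \eqref{yfdstre64u}, applies the Mecke identity \eqref{yfuftyttfyoi9u} to convert the $\mu_\lambda$-expectation of such sums into $\int d\mu_\lambda(\eta)\int_X dx\int_{\Rp}l(s)\,ds$ of quantities evaluated at $\eta+s\delta_x$, performs ordinary one-dimensional integration by parts in $x\in X$ and in $s\in\Rp$ (which is where $\operatorname{div}^X v$, $l'/l$ and the boundary term $l(0)\int_X h\,dx$ arise), and applies Mecke back. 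No differentiation under the $\mu_\lambda$-integral and no control of the Radon--Nikodym densities is needed there.

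The genuine gap in your argument is precisely the point the authors flag, and it sits in the multiplicative direction. There the density \eqref{rtew6u4w6} has the form $D_t(\eta)=\exp\big[\sum_{x\in\tau(\eta)}\log\big(l(e^{-th(x)}s_x)/l(s_x)\big)+C_t\big]$, an exponential of an infinite sum over all atoms of $\eta$ in $\operatorname{supp}h$, including atoms of arbitrarily small mass. To differentiate $\int F(\eta)G(e^{-th}\eta)D_t(\eta)\,d\mu_\lambda(\eta)$ at $t=0$ you must dominate, uniformly for $t$ near $0$, the integrand's $t$-derivative, whose leading term is $D_t(\eta)\cdot\sum_{x\in\tau(\eta)}h(x)\,e^{-th(x)}s_x\,l'(e^{-th(x)}s_x)/l(e^{-th(x)}s_x)$, by a $\mu_\lambda$-integrable function. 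Under the theorem's hypotheses ($l\in C^1$, $l'\in L^1$, $l>0$) this is not delivered by the generic facts you list (boundedness of $g$, compact supports, moments of $\eta(\Lambda)$): the function $s\mapsto s\,l'(s)/l(s)$ need not be bounded near $0$ (bounds of this kind are exactly the extra assumptions \eqref{tyr65r}, \eqref{tye64u3e} imposed only later for the diffusion construction), the sum has infinitely many terms $\mu_\lambda$-a.s., and the product "unbounded density $\times$ its logarithmic derivative" is not controlled uniformly in $t$ by anything in the statement. By contrast, your diffeomorphism step is essentially sound (only Poisson-many factors of the cocycle differ from $1$, so exponential moments of Poisson counts give the domination), apart from the sign bookkeeping: Liouville gives $\frac{d}{dt}\big|_{t=0}J_{\psi^v_t}(x)=+\operatorname{div}^X v(x)$, and the minus sign must be traced carefully through which Jacobian (that of $\psi^v_t$ or of its inverse) actually enters $R^{(n)}_{g_t}$, rather than asserted as a property of $J_{\psi^v_t}$ itself. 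As written, the multiplicative half of your proof is an announcement of the needed estimates rather than a proof of them, and it is exactly the step the paper's Mecke-formula argument is designed to avoid.
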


\begin{proof} At least heuristically, formulas \eqref{tyre675e4}, \eqref{huf7tyr}
may be easily derived from Theorem~\ref{yur767} and its proof, see, in  particular,  \eqref{rtew6u4w6} and \eqref{hjvfut7r678} and compare with formulas \eqref{huvfrt7r75} and \eqref{uyfr7r7r}. In fact, for some measures $\mu_\lambda$, like for example the gamma measure, one may rigorously justify these calculations. However, in the general case, such a justification seems to be quite a difficult problem. So below, we will present an alternative proof, which is based on the Mecke formula for the Poisson measure  \cite[Satz~3.1]{Mecke}, see also \cite[Exercise~11.1]{Kal}.

Using the Mecke formula satisfied by the Poisson measure $\pi_\varkappa$ and the measurable bijective mapping $\mathscr R  :\Gamma_{pf}(\hat X)\to\mathbb K(X)$, we conclude that, for each
measurable function $G:\K(X)\times \hat X\to[0,\infty]$,
\begin{equation}\label{yfuftyttfyoi9u}
\int_{\K(X)}d\mu_\lambda(\eta)\int_X d\eta(x)\,G(\eta,s_x,x)=
\int_{\K(X)}d\mu_\lambda(\eta)\int_{X}dx\int_{\Rp}\,ds\, l(s) G(\eta+s\delta_x,s,x).
\end{equation}

Let $F:\K(X)\to\R$, $\eta\in\K(X)$, and $x\in\tau(\eta)$. We define
\begin{align}
(\nabla^X_xF)(\eta):=&\nabla^X_y\big|_{y=x}F(\eta-s_x\delta_x+s_x\delta_y)\label{uyfruf},\\
(\nabla^{\Rp}_{x})F(\eta):=&s_x\frac{d}{du}\Big|_{u=s_x}F(\eta-s_x\delta_x+u\delta_x),\label{fr8r}
\end{align}
provided the derivatives  exist.
Here the variable $y$ is from $X$,  $\nabla^X_y$ denotes the  gradient on $X$ in the $y$ variable, and
the variable $u$ is from $\Rp$.
An easy calculation shows that, for each function $F\in\FCK$ and $(v,h)\in\mathfrak g$,
\begin{align}
(\nabla_v^{\M}F)(\eta)&=
\int_X \langle(\nabla_x^{X}F)(\eta),v(x)\rangle_{T_x(X)}\,d\tilde \eta(x)=
\sum_{x\in\tau(\eta)}\langle(\nabla_x^{X}F)(\eta),v(x)\rangle_{T_x(X)},\notag\\
(\nabla_h^{\M}F)(\eta)&=
\int_X (\nabla_x^{\Rp}F)(\eta)h(x)\,d\tilde \eta(x)=
\sum_{x\in\tau(\eta)}
(\nabla_x^{\Rp}F)(\eta)h(x).
\label{yfdstre64u}
\end{align}
Here, $T_x(X)$ denotes the tangent space to $X$ at point $x$.

By  \eqref{yfuftyttfyoi9u} and \eqref{yfdstre64u},
\begin{align}
& \int_{\K(X)}(\nabla^{\M}_{(v,h)}F)(\eta)G(\eta)\,d\mu_\lambda(\eta)\notag\\
&\quad =\int_{\K(X)}d\mu_\lambda(\eta)\int_{\Rp} d\lambda(s)\int_{ X}dx\,\la\nabla^X_x F(\eta+s\delta_x), v(x)\ra_{T_x(X)} \,G(\eta+s\delta_x)\notag\\
&\qquad\text{}+\int_{\K(X)}d\mu_\lambda(\eta)\int_{ X}dx\int_{\Rp} ds\, l(s)\left(\frac{d}{ds}\, F(\eta+s\delta_x)\right)h(x)G(\eta+s\delta_x).\label{gf7yr}
\end{align}
Note that, since the function $l$ is continuously differentiable on $\Rp$ and $l'$ is integrable,
we get $\lim_{s\to\infty}l(s)=0$ and $\lim_{l\to 0}l(s)=-\int_{\Rp}l'(s)\,ds=l(0)$. By the definition of $\FCK$, for any fixed $\eta\in\K(X)$ and $x\in X$, the function $\Rp\ni s\mapsto F(\eta+s\delta_x)$ is bounded, smooth, and its derivative has a compact support in $\Rp$. Furthermore, for any fixed $\eta\in\K(X)$ and $s\in\Rp$, the function $X\ni x\mapsto F(\eta+s\delta_x)$ is smooth and its gradient is identically equal to zero if $s<\frac1n$. Hence, integration by parts in \eqref{gf7yr} gives
\begin{align}
& \int_{\K(X)}(\nabla^{\M}_{(v,h)}F)(\eta)G(\eta)\,d\mu_\lambda(\eta)\notag\\
&\quad =
\int_{\K(X)}d\mu_\lambda(\eta)\int_{[\frac1n,\infty)} d\lambda(s)\int_{ X}dx\,
F(\eta+s\delta_x)\notag\\
&\qquad\qquad\times \big(
-\la\nabla^X_x G(\eta+s\delta_x), v(x)\ra_{T_x(X)}-G(\eta+s\delta_x)\operatorname{div}^Xv(x)\big)
\notag\\
&\qquad\text{}+\int_{\K(X)}d\mu_\lambda(\eta)\int_{ X}dx\,h(x)\int_{\Rp} ds\, l(s)F(\eta+s\delta_x)\left(-\frac{d}{ds}\, G(\eta+s\delta_x)-G(\eta)\frac{l'(s)}{l(s)}\right)\notag\\
&\qquad\text{}-\int_{\K(X)}d\mu_\lambda(\eta)\int_{ X}dx\,h(x) l(0)F(\eta)G(\eta).\label{frte6u}
\end{align}
Applying \eqref{yfuftyttfyoi9u} to \eqref{frte6u}, we get the statement.
\end{proof}

\begin{example}
For the gamma measure,
$$ B_h(\eta)=-\langle h,\eta\rangle+\int_X h(x)\,dx.$$
If $l(s)$ satisfies \eqref{utyfr65e6afxt} (with $\alpha\in(0,1]$), we get  $l(0)=0$ and
$$\frac{l'(s)}{l(s)}=-\frac\alpha{s\log s},\quad s\in(0,\rho).$$
\end{example}

\section{Laplace operator}

Our next aim is to construct a Laplace operator associated with the measure $\mu_\lambda$. The definition of such an operator depends on the choice of a  tangent bundle.

Recall that we constructed the measure $\mu_\lambda$ by taking the pushforward of the Poisson measure $\pi_\varkappa$ under  the mapping $\mathscr R$. According to \cite{AKR}, a tangent space  to $\Gamma(\hat X)$ at $\gamma\in \Gamma(\hat X)$ is defined by
$$T_\gamma(\Gamma)=L^2(\hat X\to T(X)\times\R,\gamma).$$
Note that, for each $\gamma\in\Gamma_{pf}(\hat X)$,
$$T_\gamma(\Gamma)=\bigoplus_{(s,x)\in\gamma}(T_x(X)\times\R)
=\bigoplus_{x\in\tau(\mathscr R\gamma)}(T_x(X)\times\R).$$
So, it is natural to introduce a tangent space to $\K(X)$ at $\eta\in\K(X)$ by
\begin{equation}\label{rte75i6}
T_\eta(\K):=\bigoplus_{x\in\tau(\eta)}(T_x(X)\times\R)=L^2(X\to T(X)\times\R,\tilde\eta)=
L^2(X\to T(X),\tilde\eta)\oplus L^2(X,\tilde\eta).\end{equation}
 We then define a gradient of a differentiable function $F:\K(X)\to\R$ at $\eta$ as the  element $(\nabla^{\K}F)(\eta)$ of $T_\eta(\K)$ which satisfies
$$(\nabla^{\M}_{(v,h)}F)(\eta)=\langle (\nabla^{\K}F)(\eta),(v,h)\rangle_{T_\eta(\K)}\quad\text{for all }(v,h)\in\mathfrak g.$$
Note that, by \eqref{yfdstre64u}, for each $F\in\FCK$,
\begin{equation}\label{xrdtr}
(\nabla^\K F)(\eta,x)=\big((\nabla_x^XF)(\eta),(\nabla_x^{\Rp}F)(\eta)\big),\quad\eta\in\K(X),\  x\in\tau(\eta). \end{equation}

Let us assume that conditions \eqref{vytd7}, \eqref{tyd6e6i} are satisfied. We consider the {\it Dirichlet integral} (or the {\it Dirichlet form})
\begin{equation}\label{gddydyyd}
\mathcal E_\lambda^{\mathbb K}(F,G):=\frac12\int_{\K(X)}\langle
\nabla^{\mathbb K}F,\nabla^{\mathbb K}G\rangle_{T(\K)}\,d\mu_\lambda,\quad F,G\in\FCK.\end{equation}
It can be easily seen from \eqref{yfuftyttfyoi9u} and \eqref{xrdtr} that the function under the sign of integral in \eqref{gddydyyd} is indeed integrable. Furthermore, $\mathcal E_\lambda^{\mathbb K}$ is a well defined, symmetric bilinear form on $L^2(\K(X),\mu_\lambda)$.

 For a function $F\in\FCK$, $\eta\in\K(X)$, and $x\in\tau(\eta)$, we denote
 \begin{align}
 (\Delta^X_x F)(\eta):&=\Delta_y^X\big|_{y=x}F(\eta-s_x\delta_x+s_x\delta_y),\label{out678r5}\\
 (\Delta^{\Rp}_xF)(\eta):&=\Delta^{\Rp}_u\big|_{u=s_x}
 F(\eta-s_x\delta_x+u\delta_x).\label{uiyt6785o}
 \end{align}
 Here, for a twice differentiable function $f:\Rp\to\R$,
\begin{equation}\label{fctyr567}
(\Delta^{\Rp}f)(s):=s^2f''(s)+sf'(s)+s^2\,\frac{l'(s)}{l(s)}\,f'(s),\quad s\in\Rp,\end{equation}
 and $\Delta^X=\operatorname{div}^X\nabla^X$ is the Laplace--Beltrami operator on $X$.

 \begin{theorem}\label{vgfyt7r}
 Assume that \eqref{vytd7}--\eqref{uyfr7r78r} hold. Assume that the function $l$ is continuously differentiable on $\Rp$.
 For each $F\in\FCK$, we define
\begin{equation}\label{ftur766}
 (L^{\K}_\lambda F)(\eta):=\frac12\int_X \left[(\Delta^X_x F)(\eta)+(\Delta^{\Rp}_xF)(\eta)\right]\,d\tilde\eta(x),\quad \eta\in\K(X). \end{equation}
 Then $(L_\lambda^\K,\FCK)$ is a symmetric operator in $L^2(\K(X),\mu_\lambda)$ which satisfies
 \begin{equation}\label{rte6e6}
\mathcal E_\lambda^{\mathbb K}(F,G)=(-L_\lambda^{\mathbb K}F,G)_{L^2(\K(X),\,\mu_\lambda)},\quad F,G\in\FCK.\end{equation}
The bilinear form $(\mathcal E_\lambda^{\mathbb K},\FCK)$ is closable on $L^2(\K(X),\mu_\lambda)$, and its closure is denoted by $(\mathcal E_\lambda^{\mathbb K},D(\mathcal E_\lambda^{\mathbb K}))$. The operator $(L_\lambda^{\mathbb K},\FCK)$ has Friedrichs' extension, denoted by $(L_\lambda^{\mathbb K},D(L_\lambda^{\mathbb K}))$---the generator of the closed symmetric form $(\mathcal E_\lambda^{\mathbb K},D(\mathcal E_\lambda^{\mathbb K}))$.
  \end{theorem}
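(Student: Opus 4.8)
The plan is to derive the Green-type identity \eqref{rte6e6} from the Mecke identity \eqref{yfuftyttfyoi9u} combined with two integrations by parts --- one on $X$ and one on $\Rp$ --- and then to obtain the closability of $(\mathcal E_\lambda^{\K},\FCK)$ and the existence of the Friedrichs extension from the classical theory of semibounded symmetric operators. Throughout I use that $\FCK$ is dense in $L^2(\K(X),\mu_\lambda)$, which follows, via the unitary identification induced by $\mathscr R$, from the density of cylinder functions in $L^2(\Gamma(\hat X),\pi_\varkappa)$, and that the form $\mathcal E_\lambda^{\K}$ is well defined (as noted right after \eqref{gddydyyd}).

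\emph{First, $L_\lambda^{\K}$ maps $\FCK$ into $L^2(\K(X),\mu_\lambda)$.} For $F\in\FCK$ of the form \eqref{fty6ed6u45}, a direct computation from \eqref{out678r5}--\eqref{fctyr567} shows that $\int_X\big[(\Delta^X_xF)(\eta)+(\Delta^{\Rp}_xF)(\eta)\big]\,d\tilde\eta(x)$ is a finite sum of terms $(\partial_ig)(\ldots)\,\langle\!\langle\chi_i,\eta\rangle\!\rangle$ and $(\partial_i\partial_jg)(\ldots)\,\langle\!\langle\chi_{ij},\eta\rangle\!\rangle$ with $\chi_i,\chi_{ij}\in C_0(\hat X)$; here one uses that each $\varphi_i$ is supported in $[1/n,\infty)\times X$, so that $l'/l$ is continuous and bounded on the relevant compact set by $l\in C^1(\Rp)$ and \eqref{uyfr7r78r}. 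Since $g\in C_b^\infty(\R^N)$ has bounded derivatives, and each $\langle\!\langle\chi,\eta\rangle\!\rangle=\langle\chi,\mathscr R^{-1}\eta\rangle$ has exponential moments under $\mu_\lambda$ (the Poisson nature of $\pi_\varkappa$ together with $\operatorname{supp}\chi$ compact in $\hat X$), one gets $L_\lambda^{\K}F\in\bigcap_{p\ge1}L^p(\K(X),\mu_\lambda)$; in particular $(L_\lambda^{\K},\FCK)$ is a densely defined operator in $L^2(\K(X),\mu_\lambda)$.

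\emph{The main step is \eqref{rte6e6}.} By \eqref{xrdtr} and the definition of $T_\eta(\K)$ in \eqref{rte75i6}, $\mathcal E_\lambda^{\K}(F,G)=\tfrac12\int_{\K(X)}d\mu_\lambda(\eta)\sum_{x\in\tau(\eta)}\big[\la(\nabla^X_xF)(\eta),(\nabla^X_xG)(\eta)\ra_{T_x(X)}+(\nabla^{\Rp}_xF)(\eta)(\nabla^{\Rp}_xG)(\eta)\big]$. Applying the Mecke identity \eqref{yfuftyttfyoi9u} --- equivalently, using that under it the sum over $\tau(\eta)$ turns into $\int_Xdx\int_{\Rp}d\lambda(s)$ with $\eta$ replaced by $\eta+s\delta_x$ --- and noting that $x\notin\tau(\eta)$ for $dx$-a.e.\ $x$, so that by \eqref{uyfruf}, \eqref{fr8r} one has $(\nabla^X_xF)(\eta+s\delta_x)=\nabla^X_y\big|_{y=x}F(\eta+s\delta_y)$ and $(\nabla^{\Rp}_xF)(\eta+s\delta_x)=s\,\tfrac{d}{du}\big|_{u=s}F(\eta+u\delta_x)$, we reduce to an integral over $\K(X)\times X\times\Rp$ against $d\mu_\lambda(\eta)\,dx\,\tfrac{l(s)}{s}\,ds$. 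I would then integrate by parts in $x$ (for fixed $\eta,s$: Green's formula with respect to the Riemannian volume, legitimate since $y\mapsto F(\eta+s\delta_y)$ is smooth with compactly supported gradient) and in $s$ (for fixed $\eta,x$: $s\mapsto\tfrac{d}{ds}F(\eta+s\delta_x)$ has compact support in $\Rp$, so no boundary terms, and $\tfrac{d}{ds}\big(l(s)\,s\,\tfrac{d}{ds}G(\eta+s\delta_x)\big)=\tfrac{l(s)}{s}\,(\Delta^{\Rp}_xG)(\eta+s\delta_x)$, which is precisely \eqref{fctyr567}, using $l\in C^1$ and $l>0$). Re-applying \eqref{yfuftyttfyoi9u} in the reverse direction yields $\mathcal E_\lambda^{\K}(F,G)=-\int_{\K(X)}F(\eta)\,(L_\lambda^{\K}G)(\eta)\,d\mu_\lambda(\eta)=(F,-L_\lambda^{\K}G)_{L^2(\K(X),\mu_\lambda)}$, and since $\mathcal E_\lambda^{\K}$ is symmetric, also $=(-L_\lambda^{\K}F,G)_{L^2(\K(X),\mu_\lambda)}$; this gives \eqref{rte6e6} and forces $(L_\lambda^{\K},\FCK)$ to be symmetric, while $\mathcal E_\lambda^{\K}(F,F)=\tfrac12\int_{\K(X)}\|(\nabla^{\K}F)(\eta)\|^2_{T_\eta(\K)}\,d\mu_\lambda(\eta)\ge0$ shows $-L_\lambda^{\K}\ge0$.

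\emph{Finally, closability and the Friedrichs extension.} Since $-L_\lambda^{\K}$ is a densely defined, symmetric, nonnegative operator on $\FCK$ with associated bilinear form $\mathcal E_\lambda^{\K}$, closability of $(\mathcal E_\lambda^{\K},\FCK)$ is standard: if $F_k\to0$ in $L^2(\K(X),\mu_\lambda)$ and $(F_k)$ is $\mathcal E_\lambda^{\K}$-Cauchy, then $\mathcal E_\lambda^{\K}(F_k,F_m)=(F_k,-L_\lambda^{\K}F_m)\to0$ for fixed $m$, and a Cauchy--Schwarz argument for the nonnegative form $\mathcal E_\lambda^{\K}$ gives $\mathcal E_\lambda^{\K}(F_k,F_k)\to0$. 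Its closure $(\mathcal E_\lambda^{\K},D(\mathcal E_\lambda^{\K}))$ is a closed nonnegative symmetric form, and by the classical construction of the Friedrichs extension of a semibounded symmetric operator, $(L_\lambda^{\K},\FCK)$ admits a self-adjoint extension whose associated form is exactly $(\mathcal E_\lambda^{\K},D(\mathcal E_\lambda^{\K}))$; this is the operator $(L_\lambda^{\K},D(L_\lambda^{\K}))$ of the statement. The only genuinely delicate part is the computation in the third paragraph: justifying the forward and reverse uses of the Mecke identity via the requisite absolute integrability --- which reduces to the moment estimates of the first paragraph --- and checking that the weight $\tfrac{l(s)}{s}$ coming from $d\lambda$ is exactly what converts the naive second-order radial operator into $\Delta^{\Rp}$ as in \eqref{fctyr567}, with all boundary terms in the $s$-integration by parts vanishing because $\tfrac{d}{ds}F(\eta+s\delta_x)$ has compact support in $\Rp$.
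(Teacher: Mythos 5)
Your proposal is correct and follows essentially the same route as the paper's own proof: the identity \eqref{rte6e6} is obtained exactly as in the paper via the Mecke formula \eqref{yfuftyttfyoi9u} (leading to \eqref{yur75i6erdes}), integration by parts in $x$ and in $s$ with the weight $\tfrac{l(s)}{s}$ producing $\Delta^{\Rp}$, and a reverse application of \eqref{yfuftyttfyoi9u}, while square-integrability of $L^{\K}_\lambda F$ is reduced to moment bounds for $\langle\!\langle\chi,\eta\rangle\!\rangle$ with $\chi$ compactly supported in $\hat X$ (the paper uses the bound $|(\Delta^X_xF)(\eta)|+|(\Delta^{\Rp}_xF)(\eta)|\le C_1\chi_\Lambda(s_x,x)$ and a second-moment estimate via Mecke), and the closability and Friedrichs extension are the standard facts for nonnegative symmetric operators that the paper cites from Reed--Simon.
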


  \begin{remark}Let $L^{\hat X}$ denote the operator acting on functions on $\hat X$ as follows:
\begin{equation}\label{fyd7yr867r}
 (L^{\hat X}f)(s,x):=\frac12(\Delta^X_xf)(s,x)+\frac12(\Delta^{\Rp}_sf)(s,x).
 \end{equation}
 Then,  the following informal formula holds:
 $$ (L_\lambda^{\K}F)\bigg(\sum_is_i\delta_{x_i}\bigg)=\sum_j
 L^{\hat X}_{(s_j,x_j)}F\bigg(\sum_is_i\delta_{x_i}\bigg),
 $$
 where $L^{\hat X}_{(s_j,x_j)}$ is the $L^{\hat X}$ operator acting in the $(s_j,x_j)$ variable.
 \end{remark}

 \begin{remark}Compared with the integration by parts formula
from Theorem~\ref{tuyr75e464w}, in the definition of the operator $L_\lambda^{\K}$ we do not use the cut-off condition
 $s_x\ge1/n$ for some $n$. This is actually due to the fact that, for a function $F\in\FCK$, we get, for some $n\in\N$,
$$\nabla^X_y F(\eta- s_x\delta_x+s_x\delta_y)=0 \quad \text{if }s_x<1/n.$$
Hence, if $s_x<1/n$,
$$\Delta^X_y F(\eta- s_x\delta_x+s_x\delta_y)=\operatorname{div}^X_y\nabla^X_y F(\eta- s_x\delta_x+s_x\delta_y)=0.$$
 Thus, although the integration by parts formula for the measure $\mu_\lambda$ holds only in a weak sense, we get a proper {\it Laplace operator $L_\lambda^{\K}$ relative to the measure $\mu_\lambda$}.
 \end{remark}

 \begin{proof}[Proof of Theorem~\ref{vgfyt7r}]
 Formulas \eqref{ftur766}, \eqref{rte6e6} can be derived from Theorem~\ref{tuyr75e464w}. Alternatively, we may give a direct proof of these formulas by analogy with the proof of Theorem~\ref{tuyr75e464w}. Indeed, by \eqref{yfuftyttfyoi9u}--\eqref{uyfruf},  and  \eqref{rte75i6}--\eqref{gddydyyd}, we get, for any $F,G\in\FCK$,
 \begin{align}
 \mathcal E^\K_\lambda(F,G)&=\frac12\int_{\K(X)}d\mu_\lambda(\eta)\int_{X}dx\int_{\Rp}ds\,\frac{l(s)}s
 \bigg[
 \langle\nabla^X_x F(\eta+s\delta_x),\nabla^X_x G(\eta+s\delta_x)\rangle_{T_x(X)}
  \notag\\
 &\qquad+\bigg(s\frac{d}{ds}F(\eta+s\delta_x)\bigg)\bigg(s\frac{d}{ds}F(\eta+s\delta_x)\bigg)
 \bigg].\label{yur75i6erdes}
\end{align}
  From here, using integration by parts and  \eqref{yfuftyttfyoi9u}, formulas \eqref{ftur766}, \eqref{rte6e6} follow.

  Let us show that, for each $F\in\FCK$, $L_\lambda^\K F\in L^2(\K(X),\mu_\lambda)$. It follows from the definition of $\FCK$, formulas \eqref{uiyt6785o}--\eqref{fctyr567}, and the assumption of the theorem that, for each $F\in\FCK$, there exist a compact set $\Lambda\subset \hat X$ and a constant $C_1>0$ such that
 $$
|(\Delta_x^{X}F)(\eta)|+ |(\Delta_x^{\Rp}F)(\eta)|\le C_1\chi_\Lambda(s_x,x),\quad \eta\in\K(X),\ x\in\tau(\eta),
 $$
 where $\chi_\Lambda$ denotes the indicator function of $\Lambda$. Thus, by \eqref{ftur766}, it suffices to show that
 $$\int_{\K(X)}\bigg(\int_X\chi_\Lambda(s_x,x)\,d\tilde\eta(x)\bigg)^2\,d\mu_\lambda(\eta)<\infty$$
 This can be easily deduced from \eqref{yfuftyttfyoi9u}.

 The statements that the bilinear form $(\mathcal E_\lambda^{\mathbb K},\FCK)$ is closable on $L^2(\K(X),\mu_\lambda)$ and that the operator $(L_\lambda^{\mathbb K},\FCK)$ has Friedrichs' extension are now standard, see e.g.\ \cite[Theorem~X.23]{RS}.
 \end{proof}

 \begin{theorem}\label{ytre654e6u}
 Let the assumptions of Theorem~\ref{vgfyt7r} be satisfied.
 Then the operator $(L_\lambda^{\mathbb K},D(L_\lambda^{\mathbb K}))$ is essentially self-adjoint on $\FCK$.
 \end{theorem}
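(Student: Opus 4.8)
The plan is to reduce the assertion, via the isomorphism $\mathscr R$, to the well-known essential self-adjointness of the intrinsic Dirichlet operator of a Poisson measure on a configuration space, and then to an essential self-adjointness statement for a one-particle operator on $\hat X$.

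First I would transport the problem to the Poisson space. Since $\mu_\lambda$ is the pushforward of the Poisson measure $\pi_\varkappa$ under the measurable bijection $\mathscr R:\Gamma_{pf}(\hat X)\to\K(X)$, the map $U_{\mathscr R}F:=F\circ\mathscr R$ is unitary from $L^2(\K(X),\mu_\lambda)$ onto $L^2(\Gamma(\hat X),\pi_\varkappa)$ and carries $\FCK$ onto $\FCC$. Using the explicit formula \eqref{ftur766} together with the particle-wise description of $L_\lambda^{\K}$ in the Remark following Theorem~\ref{vgfyt7r}, one checks that $U_{\mathscr R}$ conjugates $(L_\lambda^{\K},\FCK)$ into $-\mathcal H$, where $\mathcal H$ is the (nonnegative) intrinsic Dirichlet operator of $\pi_\varkappa$ on $\Gamma(\hat X)$ in the sense of \cite{AKR}, whose one-particle operator on $L^2(\hat X,\varkappa)$ is $H^{\hat X}:=-L^{\hat X}=-\tfrac12\Delta^X-\tfrac12\Delta^{\Rp}$, cf.\ \eqref{fyd7yr867r}; here one uses that $\varkappa$ has the strictly positive $C^1$ density $s\mapsto l(s)/s$ with respect to the product of the Riemannian volumes, and that $-L^{\hat X}$ is precisely the generator of the Dirichlet form $f\mapsto\tfrac12\int_{\hat X}|\nabla^{\hat X}f|^2\,d\varkappa$. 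By the standard argument of \cite{AKR} — which runs through the unitary isomorphism between $L^2(\Gamma(\hat X),\pi_\varkappa)$ and the symmetric Fock space over $L^2(\hat X,\varkappa)$, under which $\mathcal H$ becomes the second quantization of $H^{\hat X}$, and which applies equally when the one-particle operator is the generator of a Dirichlet form on a weighted manifold (see also \cite{KLU} for the marked case) — the operator $(L_\lambda^{\K},\FCK)$ is essentially self-adjoint as soon as $(H^{\hat X},C_0^\infty(\hat X))$ is essentially self-adjoint in $L^2(\hat X,\varkappa)$.

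It thus remains to prove this one-particle statement. Since $\hat X=\Rp\times X$ and $\varkappa=\tfrac{l(s)}{s}\,ds\otimes dx$ is a product measure, $L^2(\hat X,\varkappa)=L^2(\Rp,\tfrac{l(s)}{s}\,ds)\otimes L^2(X,dx)$ and $H^{\hat X}=(-\tfrac12\Delta^{\Rp})\otimes 1+1\otimes(-\tfrac12\Delta^X)$ is a sum of two commuting nonnegative symmetric operators. That $-\tfrac12\Delta^X$ is essentially self-adjoint on $C_0^\infty(X)$ is the classical fact that the Laplace--Beltrami operator of a complete Riemannian manifold is essentially self-adjoint on smooth compactly supported functions. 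For $-\tfrac12\Delta^{\Rp}$ I would change variables $t=\log s$: by the definition of $d_{\Rp}$ this is an isometry of $(\Rp,d_{\Rp})$ onto $\R$, it sends $\tfrac{l(s)}{s}\,ds$ to $L(t)\,dt$ with $L(t):=l(e^t)>0$, $L\in C^1$, and, by \eqref{fctyr567}, it sends $\Delta^{\Rp}$ to the one-dimensional weighted Laplacian $\partial_t^2+(\log L)'(t)\partial_t=\tfrac1L\partial_t(L\partial_t\,\cdot\,)$ on $L^2(\R,L\,dt)$. By Weyl's limit-point/limit-circle theory it is essentially self-adjoint on $C_0^\infty(\R)$ once it is in the limit-point case at both ends. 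At $-\infty$ this is immediate, since $\lambda(\Rp)=\infty$ combined with $\int^{\infty}L\,dt<\infty$ (which is exactly \eqref{tyd6e6i} near $s=\infty$) forces $\int_{-\infty}L\,dt=\infty$, so the constant solution of $(Lf')'=0$ is not square-integrable near $-\infty$. At $+\infty$ one only has $\int^{\infty}L\,dt<\infty$, and the point is to rule out the limit-circle case: the second solution $f_2(t):=\int^t L(u)^{-1}\,du$ is finite for every $t$ and tends to $\infty$ (because $L$ is continuous and strictly positive), and a Cauchy--Schwarz estimate over dyadic intervals gives $\int^{\infty}f_2(t)^2L(t)\,dt=\infty$; hence the limit-point case holds at $+\infty$ as well. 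Therefore $-\tfrac12\Delta^{\Rp}$ is essentially self-adjoint on $C_0^\infty(\Rp)$, and by the standard theorem on tensor sums of semibounded self-adjoint operators $H^{\hat X}$ is essentially self-adjoint on $C_0^\infty(\Rp)\otimes C_0^\infty(X)$, hence on the larger set $C_0^\infty(\hat X)$.

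I expect the main obstacle to be this one-particle estimate in the $\Rp$-direction, specifically the verification of the limit-point condition at $s\to\infty$ under only the hypotheses $l\in C^1$, $l>0$, \eqref{tyd6e6i} and $\lambda(\Rp)=\infty$; the remaining ingredients are either a change of variables, a product decomposition, or appeals to results available in the literature.
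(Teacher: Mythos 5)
Your proposal is correct and its skeleton coincides with the paper's: transport the form by $\mathscr R$ to the Poisson space over $\hat X$, identify it with the intrinsic Dirichlet form of $\pi_\varkappa$, pass through the Fock-space isomorphism so that the generator becomes the differential second quantization of $L^{\hat X}$, and thus reduce everything to essential self-adjointness of $(L^{\hat X},C_0^\infty(\hat X))$ in $L^2(\hat X,\varkappa)$. (One point you fold into ``the standard argument of \cite{AKR}'': the algebraic Fock domain corresponds under $I^{-1}$ to \emph{polynomials} in $\langle\!\langle\varphi,\cdot\rangle\!\rangle$, not to the bounded cylinder functions $\FCK$, so one still needs the approximation step showing these polynomials lie in the closure of $(L_\lambda^{\K},\FCK)$; the paper states this explicitly, if tersely.) The only genuine divergence is the one-dimensional ingredient: after the change of variables $t=\log s$ the paper quotes Wielens \cite[Theorem~2.3]{W} for the operator \eqref{xtst5w}, whereas you prove essential self-adjointness of $\frac12\Delta^{\Rp}$ on $C_0^\infty(\Rp)$ directly by Weyl limit-point/limit-circle theory. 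Your argument is sound: at $-\infty$ the constant solution of $(Lf')'=0$ is not in $L^2(L\,dt)$ because $\int_{-\infty}L\,dt=\int_0^1 l(s)s^{-1}\,ds=\infty$ (this uses \eqref{guf7r}, which is the paper's standing assumption though not formally listed in Theorem~\ref{vgfyt7r}; and if $\lambda(\Rp)<\infty$ the endpoint is still limit point by the same argument you use at $+\infty$); at $+\infty$ your claim $\int^{\infty}f_2^2L\,dt=\infty$ is true and your Cauchy--Schwarz hint closes in one line: since
\begin{equation*}
\int_a^b \frac{dt}{f_2^2L}=\int_a^b\frac{f_2'}{f_2^2}\,dt\le\frac1{f_2(a)},\qquad\text{hence}\qquad
\int_a^b f_2^2L\,dt\ \ge\ (b-a)^2\,\bigg(\int_a^b \frac{dt}{f_2^2L}\bigg)^{-1}\ \ge\ (b-a)^2 f_2(a),
\end{equation*}
the integrals over unit (or dyadic) intervals tend to infinity, ruling out the limit-circle case. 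What each route buys: citing \cite{W} is shorter and covers weighted operators uniformly with minimal regularity, while your Weyl argument makes the proof self-contained and shows transparently that only $l>0$, $l\in C^1$, \eqref{tyd6e6i} and $\lambda(\Rp)=\infty$ enter (plus the routine identification of the $C_0^\infty(\R)$-closure with the minimal operator, harmless for $C^1$ coefficients). Both your proof and the paper's tacitly use completeness of $X$ for the essential self-adjointness of $\frac12\Delta^X$ on $C_0^\infty(X)$ and the standard tensor-sum argument to pass from $C_0^\infty(\Rp)\otimes C_0^\infty(X)$ to $C_0^\infty(\hat X)$.
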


 \begin{proof}
 Consider the symmetric operator $(\frac12\Delta^{\Rp},C_0^\infty(\Rp))$. We construct the unitary operator
 \begin{gather}
  \textstyle U: L^2(\Rp,\frac{l(s)}s\,ds)\to L^2(\R,l(e^u)\,du),\notag\\
  (Uf)(u)=f(e^u),\quad u\in\R.\label{bufr7}
  \end{gather}
Then $UC_0^\infty(\Rp)=C_0^\infty(\R)$ and for any $g\in C_0^\infty(\R)$
\begin{equation}\label{xtst5w}
 (L^{\R}\,g)(u):=(U\frac12\Delta^{\Rp} U^{-1}g)(u)=\frac12\,g''(u)+\frac12\bigg(\frac{d}{du}\log(l(e^u))\bigg) g'(u),\quad u\in\R.\end{equation}
  Hence, by \cite[Theorem~2.3]{W}, the operator $(\frac12\Delta^{\Rp},C_0^\infty(\Rp))$ is essentially self-adjoint in $L^2(\Rp,\frac{l(s)}s\,ds)$. Furthermore, it is well known that the symmetric operator $(\frac12\Delta^X,C_0^\infty(X))$ is essentially self-adjoint in $L^2(X,dx)$. Therefore, the operator $(L^{\hat X},C_0^\infty(\hat X))$, defined by  \eqref{fyd7yr867r}, is essentially self-adjoint in $L^2(\hat X,\varkappa)$.

 For a real separable  Hilbert space $\mathcal H$, we denote
by $\mathcal F(\mathcal H)$ the symmetric Fock space over $\mathcal H$:
$$ \mathcal F(\mathcal H):=\bigoplus_{n=0}^\infty\mathcal H^{\odot n}n!\,. $$
Here $\odot$ stands for symmetric tensor product. Let $(\mathscr A,\mathscr D)$ be a densely defined
symmetric operator in $\mathcal H$.
We denote by $\mathcal F_{\mathrm{alg}}(\mathscr D)$
the subset of  $\mathcal F(\mathcal H)$ which is the linear span of the vacuum vector $\Psi=(1,0,0,\dots)$ and vectors of the form
$\varphi_1\odot \varphi_2\odot\dots\odot \varphi_n$, where $\varphi_1,\dots,\varphi_n\in\mathscr D$ and $n\in\N$. The differential second quantization $d\operatorname{Exp}(\mathscr A)$ is defined as the symmetric operator in $\mathcal F(\mathcal H)$ with domain $\mathcal F_{\mathrm{alg}}(\mathscr D)$ which acts as follows:
\begin{gather}
d\operatorname{Exp}(\mathscr A)\Psi:=0,\notag\\
d\operatorname{Exp}(\mathscr A)\varphi_1\odot \varphi_2\odot\dots\odot \varphi_n:=\sum_{i=1}^n \varphi_1\odot \varphi_2\odot\dots\odot
(\mathscr A\varphi_i)\odot\dots\odot \varphi_n.\label{uyrde56e6}
\end{gather}
By e.g.\ \cite[Chap.~6, subsec. 1.1]{BK}, if the operator $(\mathscr A,\mathscr D)$ is essentially self-adjoint in $\mathcal H$, then the differential second quantization $(d\operatorname{Exp}(\mathscr A), \mathcal F_{\mathrm{alg}}(\mathscr D))$ is essentially self-adjoint in $\mathcal F(\mathcal H)$. Hence, $(d\operatorname{Exp}(L^{\hat X}), \mathcal F_{\mathrm{alg}}(C_0^\infty(\hat X)))$
is essentially self-adjoint in $\mathcal F(L^2(\hat X,\varkappa))$.

  Let
 \begin{equation}\label{gtyr756e7i}
 I:L^2(\Gamma(\hat X),\pi_\varkappa)\to \mathcal F(L^2(\hat X,\varkappa))
 \end{equation}
  denote the unitary operator which is derived through multiple stochastic integrals with respect to the centered Poisson random measure with intensity measure $\varkappa$, see e.g.\ \cite{Surgailis}.
Let $\mathscr P$ denote the set of functions on $\Gamma(\hat X)$ which are finite sums of $\la\varphi_1,\cdot\ra\dotsm \la\varphi_n,\cdot\ra$ with $\varphi_1,\dots,\varphi_n\in C_0^\infty(\hat X)$, $n\in\N$, and constants. Thus, $\mathscr P$ is a set of polynomials on $\Gamma(\hat X)$. Using the properties of $I$, one shows that
$$ I^{-1}\mathcal F_{\mathrm{alg}}(C_0^\infty(\hat X))=\mathscr P.$$

For each $(s,x)\in\hat X$, we define an annihilation operator at $(s,x)$ acting on\linebreak  $\mathcal F_{\mathrm{alg}}(C_0^\infty(\hat X))$ by the formula
\begin{equation*}\partial_{(s,x)}\Psi:=0,\quad \partial_{(x,s)}\varphi_1\odot\varphi_2\odot\dots\odot\varphi_n:=
\sum_{i=1}^n \varphi_i(s,x)\varphi_1\odot\varphi_2\odot\dots\odot\check\varphi_i\odot\dots\odot\varphi_n,\end{equation*}
where $\check\varphi_i$ denotes  absence of $\varphi_i$. We will preserve the notation $\partial_{(s,x)}$ for the operator $I\partial_{(s,x)}I^{-1}:\mathscr P\to\mathscr P$.
This operator admits the following explicit representation:
\begin{equation}\label{vcyd6i5}
\partial_{(s,x)}F(\gamma)=F(\gamma+\delta_{(s,x)})-F(\gamma)\end{equation}
for $\pi_\varkappa$-a.a.\ $\gamma\in\Gamma(\hat X)$, see e.g.\
\cite{IK,NV}.

Denote $\mathscr L:=I^{-1}d\operatorname{Exp}(L^{\hat X})I$.
Then $(\mathscr L,\mathscr P)$ is the generator of the bilinear form
\begin{align}
\mathscr E(F,G)&=\frac12\int_{\Gamma(\hat X)}d\pi_\varkappa(\gamma)
\int_{\hat X}d\varkappa(s,x)\bigg[\la \nabla^X_x\partial_{(s,x)}F(\gamma),\nabla^X_x\partial_{(s,x)}G(\gamma)\ra_{T_x(X)}\notag\\
&\qquad+\bigg(s\frac{d}{ds}\partial_{(s,x)}F(\gamma)\bigg)\bigg(s\frac{d}{ds}\partial_{(s,x)}G(\gamma)\bigg)\bigg].\label{ft6er557}
\end{align}
Note that, by \eqref{vcyd6i5},
\begin{equation}\label{uyfr756ir}
\nabla^X_x\partial_{(s,x)}F(\gamma)=\nabla_x^X F(\gamma+\delta_{(s,x)}),\quad \frac{d}{ds}\partial_{(s,x)}F(\gamma)=\frac{d}{ds}F(\gamma+\delta_{(s,x)}).
\end{equation}
Since $(\mathscr L,\mathscr P)$ is essentially self-adjoint in $L^2(\Gamma(\hat X),\pi_\varkappa)$, by \eqref{yur75i6erdes}, \eqref{ft6er557}, and \eqref{uyfr756ir}, to prove the theorem, it suffices to show that, for any polynomial $p:\R^N\to\R$ of $N$ variables, and any $\varphi_1,\dots,\varphi_N\in C_0^\infty$, the function
$$F(\eta)=p\big(\langle\!\langle \varphi_1,\eta\rangle\!\rangle,\dots,\langle\!\langle \varphi_N,\eta\rangle\!\rangle\big),\quad\eta\in\K(X), $$
belongs to the closure of the symmetric operator $(L_\lambda^{\K},\FCK)$ in $L^2(\K(X),\mu_\lambda)$ (compare with \eqref{tdrytd6}, \eqref{fty6ed6u45}). But this can be easily done by approximation.
 \end{proof}

 Let us recall the notion of a second quantization in a symmetric Fock space. Let $B$ be a bounded linear operator in a real separable Hilbert space $\mathcal H$. Assume that the norm of $B$ is $\le1$.
 One defines the second quantization of $B$ the bounded linear operator $\operatorname{Exp}(B)$ in $\mathcal F(\mathcal H)$ which satisfies $\operatorname{Exp}(B)\Psi:=\Psi$ and for each $n\in\N$, the restriction of $\operatorname{Exp}(B)$ to $\mathcal H^{\odot n}$ coincides with $B^{\otimes n}$.

Recall the unitary operator $I$, see \eqref{gtyr756e7i}. In view of the  mapping $\mathscr R$, we can equivalently treat $I$ as a unitary operator
$$I:L^2(\K( X),\mu_\lambda)\to \mathcal F(L^2(\hat X,\varkappa)). $$

\begin{corollary}\label{tfi7r79}
 Let the assumptions of Theorem~\ref{vgfyt7r} be satisfied.
Then
$$ I^{-1}\exp(tL_\lambda^\K)I^{-1}=\operatorname{Exp}\big(\exp(tL^{\hat X})\big),\quad t\ge0.$$
Here $(L^{\hat X},D(L^{\hat X}))$ is the self-adjoint operator in $L^2(\hat X,\varkappa)$ defined as the closure of $(L^{\hat X}, C_0^\infty(\hat X))$, see \eqref{fyd7yr867r}.
\end{corollary}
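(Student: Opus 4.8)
The plan is to reduce the semigroup identity to the already-established unitary intertwining $\mathscr L = I^{-1}d\operatorname{Exp}(L^{\hat X})I$ from the proof of Theorem~\ref{ytre654e6u}, and then to invoke the standard fact that second quantization converts differential second quantization of a generator into the second quantization of its semigroup. First I would recall that, by the proof of Theorem~\ref{ytre654e6u}, the operator $(L^{\hat X},C_0^\infty(\hat X))$ is essentially self-adjoint in $L^2(\hat X,\varkappa)$, so its closure $(L^{\hat X},D(L^{\hat X}))$ is a well-defined self-adjoint operator generating a strongly continuous contraction semigroup $(\exp(tL^{\hat X}))_{t\ge0}$ on $L^2(\hat X,\varkappa)$; each $\exp(tL^{\hat X})$ is self-adjoint with norm $\le1$, so its second quantization $\operatorname{Exp}(\exp(tL^{\hat X}))$ is a well-defined self-adjoint contraction on $\mathcal F(L^2(\hat X,\varkappa))$. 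Likewise, Theorem~\ref{ytre654e6u} tells us that $(L_\lambda^{\K},\FCK)$ is essentially self-adjoint, so its closure $(L_\lambda^{\K},D(L_\lambda^{\K}))$ generates a strongly continuous self-adjoint contraction semigroup $(\exp(tL_\lambda^{\K}))_{t\ge0}$ on $L^2(\K(X),\mu_\lambda)$. (I would also note the harmless typo: the displayed formula should read $I\exp(tL_\lambda^\K)I^{-1}$, i.e. conjugation by the unitary $I$.)

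The key step is the general Fock-space identity: if $(\mathscr A,\mathscr D)$ is essentially self-adjoint in a real separable Hilbert space $\mathcal H$, then $d\operatorname{Exp}(\mathscr A)$ is essentially self-adjoint on $\mathcal F_{\mathrm{alg}}(\mathscr D)$ (already cited from \cite[Chap.~6]{BK}) and its closure satisfies $\exp(t\,\overline{d\operatorname{Exp}(\mathscr A)}) = \operatorname{Exp}(\exp(t\bar{\mathscr A}))$ for all $t\ge0$. I would justify this by diagonalizing $\bar{\mathscr A}$ via the spectral theorem: on the $n$-particle space $\mathcal H^{\odot n}$, $\overline{d\operatorname{Exp}(\mathscr A)}$ acts as $\sum_{i=1}^n \bar{\mathscr A}$ in the $i$-th tensor slot (this is the content of \eqref{uyrde56e6} extended to the closure), hence exponentiates to $\exp(t\bar{\mathscr A})^{\otimes n}$, which is by definition the restriction of $\operatorname{Exp}(\exp(t\bar{\mathscr A}))$ to $\mathcal H^{\odot n}$; summing over $n$ gives the claim. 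Applying this with $\mathcal H = L^2(\hat X,\varkappa)$, $\mathscr D = C_0^\infty(\hat X)$, $\mathscr A = L^{\hat X}$ yields $\exp(t\,\overline{d\operatorname{Exp}(L^{\hat X})}) = \operatorname{Exp}(\exp(tL^{\hat X}))$.

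Finally I would transport this identity through the unitary $I$. Since $\mathscr L = I^{-1}d\operatorname{Exp}(L^{\hat X})I$ on the common core $\mathscr P = I^{-1}\mathcal F_{\mathrm{alg}}(C_0^\infty(\hat X))$, and since in the proof of Theorem~\ref{ytre654e6u} it was shown that $(L_\lambda^{\K},\FCK)$ and $(\mathscr L,\mathscr P)$ have the same closure (both being identified with the generator of the Dirichlet form, after matching \eqref{yur75i6erdes} with \eqref{ft6er557} via \eqref{uyfr756ir}), we get $\overline{L_\lambda^{\K}} = I^{-1}\,\overline{d\operatorname{Exp}(L^{\hat X})}\,I$ as self-adjoint operators. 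Conjugation by a unitary commutes with the functional calculus, so $\exp(tL_\lambda^{\K}) = I^{-1}\exp(t\,\overline{d\operatorname{Exp}(L^{\hat X})})\,I = I^{-1}\operatorname{Exp}(\exp(tL^{\hat X}))\,I$, which is the assertion. The main obstacle is the bookkeeping in the second-to-last step: one must be careful that the two a priori different symmetric operators $(L_\lambda^{\K},\FCK)$ and $(\mathscr L,\mathscr P)$ really do have the same self-adjoint closure — but this was already settled at the end of the proof of Theorem~\ref{ytre654e6u} by the approximation argument showing that polynomial functions $p(\langle\!\langle\varphi_1,\eta\rangle\!\rangle,\dots,\langle\!\langle\varphi_N,\eta\rangle\!\rangle)$ lie in the closure of $(L_\lambda^{\K},\FCK)$, together with essential self-adjointness of $(\mathscr L,\mathscr P)$. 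So the corollary is essentially a formal consequence, and the proof should be short.
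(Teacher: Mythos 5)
Your proposal is correct and follows essentially the same route as the paper: the paper's own proof is a one-line reference to the proof of Theorem~\ref{ytre654e6u} (which identifies the closure of $(L_\lambda^{\K},\FCK)$ with $I^{-1}\,\overline{d\operatorname{Exp}(L^{\hat X})}\,I$ via the common core of polynomials) together with the standard second-quantization identity $\exp\big(t\,\overline{d\operatorname{Exp}(\mathscr A)}\big)=\operatorname{Exp}\big(\exp(t\bar{\mathscr A})\big)$ from \cite[Chap.~6, subsec.~1.1]{BK}, exactly the two ingredients you spell out. Your remark that the displayed formula should be read as conjugation by the unitary $I$ (i.e.\ $I\exp(tL_\lambda^\K)I^{-1}$) is also correct.
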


\begin{proof}
The result follows from the proof of Theorem~\ref{ytre654e6u} and the properties of a second quantization  (cf.\ e.g.\ \cite[Chap.~6, subsec.~1.1]{BK}).
\end{proof}

\section{Diffusion processes}
Let us assume that the dimension of the manifold $X$ is $\ge2$. By using  the theory of Dirichlet forms \cite{MR,MRpaper}, it can be shown   \cite{CKL} that there exists
a conservative diffusion process on $\K(X)$ (i.e., a conservative strong Markov process with continuous sample paths in $\K(X)$) which has $\mu_\lambda$
as its symmetrizing measure and its $L^2(\K(X),\mu_\lambda)$-generator is
$(L_\lambda^{\mathbb K},D(L_\lambda^{\mathbb K}))$. Unfortunately, the theory of Dirichlet forms gives rather little information apart from the very existence of the   process.
In the following subsection, under a little bit stronger assumptions on the manifold $X$ and the function $l$, we will present an explicit construction of (a version of)  this Markov process.  To this end, we will use ideas from
\cite{KLR2}.

\subsection{Explicit construction of the process}

We introduce a metric $d_\lambda$ on $\mathbb R_+$ which is associated with the measure $\lambda$: for any $s_1,s_2\in\R_+$ with $s_1<s_2$, we set
$$d_\lambda(s_1,s_2)=d_\lambda(s_2,s_1): =\lambda((s_1,s_2)).$$
We then define a metric on $\hat X$ by
\begin{equation}\label{hjgufutr}
d_{\hat X}((s_1,x_1),(s_2,x_2)):=\max\big\{d_\lambda(s_1,s_2),d_X(x_1,x_2)\big\},\end{equation}
where $d_X$ is the Riemannian metric on $X$. We fix a point $x_0\in X$ and denote by $B_{\hat X}(r)$ an open ball in $\hat X$ which is centered at $(1,x_0)$ and of radius $r$ (with respect to the metric $d_{\hat X}$).

We define the following measurable subset of $\K(X)$:
\begin{multline}
\Theta:= \big\{
\eta\in\K(X)\mid
|\tau(\eta)|=\infty\\
\text{and }
\exists K\in\N\  \forall r\in\N:\  |(\mathscr R^{-1}\eta)\cap B_{\hat X}(r)|\le K\varkappa\big(B_{\hat X}(r)\big)\big\}.\label{gudr6e7ir}\end{multline}
(Recall that the measure $\varkappa$ on $\hat X$  and the mapping $\mathscr R$ are  defined  by \eqref{5}  and \eqref{uyfgutfrrsesa}, respectively.)
It follows from the explicit construction of the measure $\mu_\lambda$ in subsec.~\ref{syigufuq} and  e.g.\ \cite{NZ} that $\mu_\lambda(\Theta)=1$.
We denote by $\mathcal B(\Theta)$ the trace $\sigma$-algebra of $\mathcal B(\K(X))$ on $\Theta$. Thus, we may consider $\mu_\lambda$ as a probability measure on $(\Theta,\mathcal B(\Theta))$. We also equip $\Theta$ with the topology induced by the topology on $\K(X)$. So, our aim now is to construct a continuous Markov process on $\Theta$ with generator $L_\lambda^{\mathbb K}$.

About the function $l$ we will  assume  below that
\begin{gather}
l\in C^2(\R_+),\label{yu7re}\\
l'\in L^1(\R_+,ds),\label{hy890y798}\\
\sup_{s\in\R_+}\frac{l'(s)s}{l(s)}<\infty,\label{tyr65r}\\
\sup_{s\in(0,1)}\frac{l'(s)s}{l(s)\log(s)}<\infty.\label{tye64u3e}
\end{gather}
One can easily check that these conditions are satisfied for the functions $l$ from Examples~\ref{example3} and~\ref{tyre56i54ei}.

Let us consider the following stochastic differential equation on $\R$:
\begin{equation}\label{ydr6ed64}
dY(t)=dW(t)+\frac{l'(e^{Y(t)})e^{Y(t)}}{2\,l(e^{Y(t)})}\,dt\end{equation}
with initial condition $Y(0)=y_0$.  Here $W(t)$ is a Brownian motion on $\R$. Note that \eqref{tyr65r} and \eqref{tye64u3e} imply  existence of a constant $C_2>0$ such that
$$\frac{s\,l'(e^s)e^s}{2\,l(e^s)}\le C_2(1+s^2),\quad s\in\R.$$
Hence, by Theorem 3 and Remark 3 in Section~6 of \cite{GS}, the stochastic differential equation \eqref{ydr6ed64} has a unique strong solution. As follows from  the proof of Theorem~ \ref{ytre654e6u}, the operator $L^{\R}$, defined by formula \eqref{xtst5w}, is essentially self-adjoint on $C^\infty_0(\R)$ in $L^2(\R,l(e^s)\,ds)$. Denote by $(L^{\R},D(L^{\R}))$ the closure of this operator.
Then, by using e.g.\ Chapter~1 of \cite{Eberle}, we conclude that the conservative Markov process $Y=(Y(t))_{t\ge0}$ has $l(e^s)\,ds$ as symmetrizing measure and $(L^{\R},D(L^{\R}))$ is its $L^2$-generator. Hence, by \eqref{bufr7} and \eqref{xtst5w}, the conservative Markov process $Z=(Z(t))_{t\ge0}$ with $Z(t):=e^{Y(t)}$  has $\lambda$ as symmetrizing measure and $(\frac12\Delta^{\R_+},D(\Delta^{\R_+}))$ is its $L^2$-generator. Here $(\frac12\Delta^{\R_+},D(\Delta^{\R_+}))$ is the closure of the operator $(\frac12\Delta^{\Rp},C_0^\infty(\R_+))$ in $L^2(\R_+,\lambda)$.

\begin{theorem}\label{yre6e}
Assume that the function $l$ satisfies \eqref{yu7re}--\eqref{tye64u3e}.
Let the dimension of the manifold $X$ be $\ge2$. Furthermore, assume that $X$ satisfies the following conditions:

\begin{itemize}
\item[(C1)] There exist $m\in\N$ and $C_3\in\R$ such that, for all $r>0$ and $\beta\ge1$
$$\int_{B_X(\beta r)}dx\le C_3\beta^m\int_{B_X(r)}dx.$$
Here $B_X(r)$ denotes the open ball in $X$ of radius $r$, centered at $x_0$.

\item[(C2)] The manifold $X$ is stochastically complete, i.e., for any $x_0\in X$, the Brownian motion $(B_t)_{t\ge0}$ on $X$ starting at $x_0$ has infinite lifetime.

\item[(C3)] The heat kernel $p(t,x,y)$ on $X$  satisfies the Gaussian upper bound for small values of $t$:

$$p(t,x,y)\le C_4\, t^{-n/2}\exp\left[-\frac{d_X(x,y)^2}{Dt}\right],\quad t\in(0,\varepsilon],\ x,y\in X,$$
where $n\in\N$, $\varepsilon>0$ and $C_4$ and $D$ are positive constants.

\end{itemize}

Then the following statements hold.

(i) For any $\eta=\sum_{i=1}^\infty s_i\delta_{x_i}\in\Theta$, let $(Z_i(t))_{t\ge0}$ and $(B_i(t))_{t\ge0}$, $i\in\N$, be independent stochastic processes such that
$Z_i(t)=e^{Y_i(t)}$, where $Y_i(t)$ is the strong solution of the stochastic differential equation \eqref{ydr6ed64} with initial condition $Y_i(0)=\ln(s_i)$ and $B_i(t)$ is a Brownian motion on $X$ with initial condition $B_i(0)=x_i$. For $t\ge0$, denote
$$\mathfrak X(t):=\sum_{i=1}^\infty Z_i(t)\delta_{B_i(t)}.$$
In particular, $\mathfrak X(0)=\eta$.
Then, with probability one, $\mathfrak X(t)\in\Theta$ for all $t\ge0$ and the sample path $[0,\infty)\ni t\mapsto\mathfrak X(t)\in\Theta$ is continuous.

(ii) Denote $\Omega:=C([0,\infty)\to\Theta)$ and  let $\mathcal F$ be the corresponding cylinder $\sigma$-algebra on $\Omega$. For each $\eta\in\Theta$, denote by $\mathbb P_\eta$ the probability measure on $(\Omega,\mathcal F)$ which is the law of the stochastic process $(\mathfrak X(t))_{t\ge0}$ from (i) starting at $\eta$. Assume now that $\mathfrak X(t)$ is chosen canonically, i.e., for each $t\ge0$ we have $\mathfrak X(t):\Omega\to\Theta$, $\mathfrak X(t)(\omega)=\omega(t)$. Furthermore, for each $t\ge0$, denote $\mathcal F_t:=\sigma\{\mathfrak X(u),\, 0\le u\le t\}$ and let $(\zeta_t)_{t\ge0}$ be the natural time shifts: $\zeta_t:\Theta\to\Theta$, $(\zeta_t\omega)(u):=\omega(t+u)$. Then
$$M=(\Omega,\mathcal F,(\mathcal F_t)_{t\ge0}, (\zeta_t)_{t\ge0}, (\mathfrak X(t))_{t\ge0},(\mathbb P_\eta)_{\eta\in\Theta})$$
is a time homogeneous Markov process on the state space $(\Theta,\mathcal B(\Theta))$ with continuous paths and  transition probabilities $(\mathbb P_t(\eta,\cdot))_{t\ge0,\, \eta\in\Theta}$, where $\mathbb P_t(\eta,\cdot)$
is the distribution of $\mathfrak X(t)$ under $\mathbb P_\eta$.

(iii) For each $t>0$ and $F\in L^2(\Theta,\mu_\lambda)$, the function
$$\Theta\ni\eta\mapsto\int_\Theta F(\xi)\mathbb P_t(\eta,d\xi)$$
is a $\mu_\lambda$-version of  $e^{tL_\lambda^\K}F\in L^2(\Theta,\mu_\lambda)$.

\end{theorem}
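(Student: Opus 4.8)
The plan is to realise the process of the theorem as the image, under the bijection $\mathscr R$ of \eqref{uyfgutfrrsesa}, of the \emph{independent infinite particle process} on $\Gamma_{pf}(\hat X)$: the Markov process in which each point $(s,x)$ of a configuration moves, independently of all the others, as a copy of the $\hat X$-diffusion $(Z(t),B(t))$, i.e.\ the diffusion on $\hat X=\Rp\times X$ whose generator is $L^{\hat X}$ of \eqref{fyd7yr867r} and whose symmetrizing measure is $\varkappa$. Parts (i) and (ii) then amount to showing that this pathwise construction is well posed and yields a continuous Markov process on $(\Theta,\mathcal B(\Theta))$, while part (iii) identifies its transition semigroup with $e^{tL_\lambda^{\K}}$ through Corollary~\ref{tfi7r79}. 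Throughout we use the facts recorded before the theorem: each one-particle process is conservative with continuous paths and its generator is the (unique, by the essential self-adjointness proved inside Theorem~\ref{ytre654e6u}) self-adjoint extension $(L^{\hat X},D(L^{\hat X}))$; the $\Rp$-component $Z(t)=e^{Y(t)}$ never reaches $0$ or $\infty$ thanks to the growth bounds \eqref{tyr65r}, \eqref{tye64u3e}; and the $X$-component $B(t)$ has infinite lifetime by (C2). The hypothesis $\dim X\ge2$ is used only to keep the particles pairwise distinct: two independent Brownian motions on a manifold of dimension $\ge2$ a.s.\ never meet, so the pinpointing property in the definition of $\Gamma_{pf}(\hat X)$ persists for all $t$, and hence $|\tau(\mathfrak X(t))|=|\tau(\eta)|=\infty$.

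The heart of the argument is a localisation estimate for part (i), in the spirit of \cite{KLR2}. Decompose $\hat X$ into the annuli $\{a\in\hat X:d_{\hat X}(a,(1,x_0))\in[n,n+1)\}$, $n\ge0$. Since $\lambda(\{s:d_\lambda(s,1)<r\})<2r$ (each side of the interval about $1$ has $\lambda$-measure $<r$ by the very definition of $d_\lambda$) and $\int_{B_X(r)}dx\le C_3 r^m\int_{B_X(1)}dx$ for $r\ge1$ by (C1), we get $\varkappa(B_{\hat X}(r))\le C\,r^{m+1}$; because $\eta\in\Theta$, the initial configuration $\mathscr R^{-1}\eta$ therefore has at most $CK(n{+}1)^{m+1}$ points in the $n$-th annulus. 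Next one shows that, uniformly in the starting point $a$ with $d_{\hat X}(a,(1,x_0))\ge n$, for every fixed $r$ with $n\ge2r$ and every $T>0$,
\[
P\big(\exists\,t\le T:\ (Z(t),B(t))\in B_{\hat X}(r)\big)\le C_T\,e^{-c_T n^{2}} .
\]
Indeed, either $d_X(x,x_0)\ge n$, and then reaching $B_{\hat X}(r)$ forces the $X$-component to travel $d_X$-distance $\ge n/2$, of probability $\le C e^{-cn^{2}/T}$ by the Gaussian heat-kernel bound (C3) together with (C1), (C2); or $d_\lambda(s,1)\ge n$, which, since $\lambda((1,\infty))<\infty$ by \eqref{tyd6e6i}, forces $s$ small, and then reaching $B_{\hat X}(r)$ forces the $\Rp$-component to increase by a $d_\lambda$-amount $\ge n/2$, hence, $l$ being bounded by \eqref{hy890y798}, $Y(t)=\log Z(t)$ to increase by $\ge c n$; but $Y(t)-Y(0)\le W(t)+\tfrac12 M t$ with $M:=\sup_s\frac{l'(s)s}{l(s)}<\infty$ by \eqref{tyr65r}, so again the probability is $\le Ce^{-cn^{2}/T}$, uniformly in $s$. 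As $\sum_n(n{+}1)^{m+1}e^{-c_T n^{2}}<\infty$, the Borel--Cantelli lemma gives: a.s., for every $r,T\in\N$, only finitely many particles ever visit $B_{\hat X}(r)$ during $[0,T]$. From this one reads off that, a.s., $\mathfrak X(t)=\mathscr R\{(Z_i(t),B_i(t))\}$ is a discrete Radon measure of locally finite mass for all $t$, that the bound defining $\Theta$ persists for all $t$ (with a possibly larger $K$), and that $t\mapsto\mathfrak X(t)$ is vaguely continuous, since on any compact subset of $X$ the sum $\la\varphi,\mathfrak X(t)\ra$ reduces, uniformly on $[0,T]$, to finitely many continuous terms. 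This proves (i).

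Part (ii) is then routine: the finite-dimensional distributions of $(\mathfrak X(t))$ started at $\eta$ are those of $\mathscr R$ applied to independent one-particle motions, the Markov property and time-homogeneity follow from the independence of the particles and the Markov property of each $(Z_i,B_i)$, path continuity is (i), and the measurability of $\eta\mapsto\mathbb P_\eta$ and of $\mathbb P_t(\eta,\cdot)$ is standard. For part (iii), note that $\mathbb P_t(\eta,\cdot)$ is the pushforward under $\mathscr R$ of the transition kernel at time $t$ of the independent infinite particle process on $\Gamma_{pf}(\hat X)$ with one-particle Markov semigroup $e^{tL^{\hat X}}$, and that this process leaves $\pi_\varkappa$ invariant and is $\pi_\varkappa$-symmetric. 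By the standard identity for independent particle dynamics over a Poisson reference measure --- the transition operator acts, under the isomorphism $I$ of \eqref{gtyr756e7i}, as the second quantization $\operatorname{Exp}$ of the one-particle operator, cf.\ \cite{KLR,Surgailis} --- one obtains $I\,\mathbb P_t\,I^{-1}=\operatorname{Exp}\big(e^{tL^{\hat X}}\big)$ on $\mathcal F(L^2(\hat X,\varkappa))$; this is checked by evaluating both sides on the polynomials $\mathscr P$ (equivalently on exponential vectors) using the independence of the particles. Since Corollary~\ref{tfi7r79} gives $\operatorname{Exp}\big(e^{tL^{\hat X}}\big)=I\,e^{tL_\lambda^{\K}}\,I^{-1}$, we conclude $\mathbb P_tF=e^{tL_\lambda^{\K}}F$ in $L^2(\Theta,\mu_\lambda)=L^2(\K(X),\mu_\lambda)$ for every $F$, which is exactly (iii).

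I expect the main difficulty to lie in part (i): making the one-particle escape estimate genuinely uniform over starting points within a distant annulus --- in particular controlling the one-dimensional diffusion $Z$ in the metric $d_\lambda$ --- and upgrading ``only finitely many offending particles'' to the pathwise statements ``$\mathfrak X(t)\in\Theta$ for all $t$'' and ``$t\mapsto\mathfrak X(t)$ continuous'' uniformly on compact time intervals. This is precisely where the geometric hypotheses (C1)--(C3) and the bounds \eqref{yu7re}--\eqref{tye64u3e} are needed, and where we follow the method of \cite{KLR2}.
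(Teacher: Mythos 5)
Your proposal follows essentially the same route as the paper's proof: the pathwise construction via independent one-particle $\hat X$-diffusions, a Borel--Cantelli localisation that balances the polynomial growth of $\varkappa(B_{\hat X}(r))$ coming from (C1) against uniform escape estimates (heat-kernel hitting bounds from (C3) for the $X$-component; the comparison $Y(t)\le Y(0)+W(t)+Ct$ from \eqref{tyr65r} together with boundedness of $l$ near $0$ for the $\Rp$-component), dimension $\ge 2$ only for pinpointing, the density-bound-implies-$\Theta$ step, and part (iii) via the second-quantization identity and Corollary~\ref{tfi7r79}. The only real deviation is cosmetic: you claim a Gaussian escape rate $C_Te^{-c_Tn^2}$ over an arbitrary horizon $T$, whereas (C3) only holds for $t\in(0,\varepsilon]$, so the paper works on the fixed interval $[0,\varepsilon]$ (obtaining merely an exponential rate via the hitting-time lemma) and extends to all times by the Markov property; since an exponential rate already beats the polynomial particle counts, this does not affect the validity of your argument.
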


\begin{remark}
If $X$ has a nonnegative Ricci curvature,
condition (C1) is satisfied with $C_3=1$ and $m$ being equal to the dimension of $X$, see e.g.\ \cite[Proposition~5.5.1]{D}.
\end{remark}

\begin{proof} (i) We divide the proof of this statement into several steps.

Step 1.  For $x\in X$, we denote by $ P_x$ the law of the Brownian motion $(B(t))_{t\ge 0}$ starting at $x$, and for $t>0$  and $A\in\mathcal B(X)$, we denote
$$p_t(x,A):=\int_A p(t,x,y)\,dy,$$
the transition probabilities of the Brownian motion.
For $A\subset X$, we denote by $T(A)$ the hitting time of $A$ by the Brownian motion.
 By \cite[Lemma~1]{KLR}, or \cite[Appendix A, Lemma~4]{Nelson} in the special case $X=\R$, we have for any $x\in X$ and  $r>0$,
\begin{equation}\label{tye656ir}
 P_x (T(B_X(x,r)^c)\le\varepsilon)\le 2 \sup_{t\in(0,\varepsilon]}\sup_{y\in X}p_t(y,B_X(y,r/4)^c).
\end{equation}
Here and below $B_X(x,r)$ denotes the open ball in $ X$, centered at $x$ and of radius $r$, and the index $c$ over a set denotes taking the compliment of this set.
By \cite[Lemma~8.2]{KLR}, (C3) implies existence of $C_5>0$ such that
\begin{equation}\label{bhvufu}
\sup_{t\in(0,\varepsilon]} \sup_{y\in X}p_t(y,B_X(y,r)^c)\le C_5 e^{-r},\quad r>0,
\end{equation}
where $\varepsilon$ is as in from (C3).
By \eqref{tye656ir} and \eqref{bhvufu}, for any  $\delta>0$ and $\alpha>0$,
\begin{equation}\label{tye7i564}
\sum_{n=1}^\infty\sup_{x\in X} P_x(T(B_X(x,\delta n^\alpha)^c)\le\varepsilon)<\infty.
\end{equation}

Step 2. For $s\in\R_+$ and $r>0$, we denote
 $$  R(s,r):=\{u\in\R_+\mid u>s,\, d_\lambda(s,u)\ge r\}.$$
Note that this set may be empty.
Let $P_s$ denote the law of $(Z(t))_{t\ge0}$ starting at $s$.
We denote by $T(R(s,r))$ the hitting time of $R(s,r)$ by $(Z(t))_{t\ge0}$.
(In the case where the set $R(s,r)$ is empty, we set $T(R(s,r)):=+\infty$).
We state that, for any $\delta>0$ and $\alpha>0$,
\begin{equation}\label{gtye65e}
\sum_{n=1}^\infty \sup_{s\in\R_+}P_s(T(R(s,\delta n^\alpha))\le\varepsilon)<\infty.
\end{equation}

Indeed, denote by $C_6$ the value of the supremum in \eqref{tyr65r}. By \cite[Chap.~VI,  Sec.~1]{IkedaWatanabe}, the stochastic process $(Y(t))_{t\ge0}$ solving the stochastic differential equation \eqref{ydr6ed64} with initial condition $Y(0)=y_0$ satisfies with probability one:
\begin{equation}\label{iut87p}
Y(t)\le W(t)+ y_0+C_6t,\quad t\ge0\end{equation}
 where $W(0)=0$.

As noted in the proof of Theorem~\ref{tuyr75e464w},  \eqref{yu7re} and \eqref{hy890y798} imply that the function $l$ has a finite limit at zero, hence it is bounded on $(0,1]$ by a constant $C_7>0$. Hence, for any $-\infty<z_1<z_2\le0$,
\begin{equation}\label{yut86o}
d_\lambda(e^{z_1},e^{z_2})=\int_{e^{z_1}}^{e^{z_2}}\frac {l(s)}s\,ds=\int_{z_1}^{z_2}l(e^{z})\,dz\le C_7(z_2-z_1). \end{equation}
 By \eqref{iut87p} and \eqref{yut86o}, formula \eqref{gtye65e} follows from the formula \eqref{tye7i564} applied to the case $X=\R$, $B(t)=W(t)$.

Step 3. By \eqref{hjgufutr}, for each $r>0$,
\begin{equation}\label{uf867t6}
B_{\hat X}(r)=B_\lambda(r)\times B_X(r),\end{equation}
where $B_\lambda(r)$ is the open ball in $\R_+$ with respect to metric $d_\lambda$, centered at 1 and of radius $r$. Hence, by (C1), for any $r>0$ and $\beta\ge1$,
\begin{equation}\label{vtuyr7o5}
\varkappa(B_{\hat X}(\beta r))\le C_3\beta^{m+1}\varkappa(B_{\hat X}(r)).\end{equation}

 Step 4. For $(s,x)\in\hat X$, we denote $|(s,x)|_{\hat X}:=d_{\hat X}((s,x),(1,x_0))$
and $|s|_\lambda:=d_\lambda(s,1)$, $|x|_X:=d_X(x,x_0)$.
Let $\eta=\sum_{i=1}^\infty s_i\delta_{x_i}\in\Theta$, and without loss of generality, we may assume that $|(s_{i+1},x_{i+1})|_{\hat X} \ge |(s_{i},x_{i})|_{\hat X}$ for all $i$. We define
\begin{equation}\label{bvgur7}
r(i):=\max\left\{n\in\N\mid n<\left(\frac{i}{KC_3\varkappa(B_{\hat X}(1))}\right)^{\frac1{m+1}}\right\},\end{equation}
where $K>0$ is the constant from \eqref{gudr6e7ir}, depending on $\eta$. (We assumed that $i$ is sufficiently large, so that the set on the right hand side of \eqref{bvgur7} is not empty.)
Then, by \eqref{vtuyr7o5},
\begin{align*}
|(\mathscr R^{-1}\eta)\cap B_{\hat X}(r(i))|&\le K\varkappa(B_{\hat X}(r(i)))\\&\le KC_3r(i)^{m+1}\varkappa(B_{\hat X}(1))<i.
\end{align*}
Hence, $(s_i,x_i)\not\in B_{\hat X}(r(i))$. Therefore, for all sufficiently large $i$,
\begin{equation}\label{yte675}
 \max\{|s_i|_\lambda,|x_i|_X\}=|(s_i,x_i)|_{\hat X}\ge r(i)\ge\left(\frac{i}{KC_3\varkappa(B_{\hat X}(1))}\right)^{\frac1{m+1}}-1\ge\delta i^{\frac1{m+1}}\end{equation}
for some $\delta>0$.

Step 5. Denote
$$ I:=\{i\in\N\mid |x_i|_X\ge |s_i|_\lambda\},\quad J:=\N\setminus I.$$
By \eqref{yte675}, for each $i\in I$ sufficiently large, we have  $|x_i|_X\ge \delta i^{\frac1{m+1}}$. Hence, by \eqref{tye7i564}, for some $C_8\ge0$,
\begin{equation}
\sum_{i\in I}P_{x_i}(T(B_X(x_i,|x_i|_X/2)^c)\le\varepsilon)\le C_8+\sum_{i\in I}P_{x_i}(T(B_X(x_i,\delta i^{\frac1{m+1}}/2)^c)\le\varepsilon)<\infty.\label{bhigt8ll}
\end{equation}
Hence, by the  Borel--Cantelli lemma, with probability one, for all but a finite number of $i\in I$,  we have $B_i(t)\in B_X(x_i,|x_i|_X/2)$ for all $t\in[0,\varepsilon]$, and so  $B_i(t)\not\in B_X(|x_i|_X/2)$ for all $t\in[0,\varepsilon]$.
This, in turn, implies that, with probability one, for all but a finite number of $i\in I$,  we have
\begin{equation}\label{ftdrs56}
(Z_i(t),B_i(t))\not\in B_{\hat X}(|(s_i,x_i)|_{\hat X}/2),\quad t\in[0,\varepsilon].\end{equation}

Analogously to \eqref{bhigt8ll}, using \eqref{gtye65e}, we get
$$\sum_{j\in J}P_{s_j}(T(R(s_j,|s_j|_\lambda/2))\le\varepsilon)<\infty. $$
Therefore, by the  Borel--Cantelli lemma, with probability one,  for all but a finite number of $j\in J$, the process $Z_j(t)$ does not reach the set $R(s_j,|s_j|_\lambda/2)$ for $t\in[0,\varepsilon]$, hence $Z_j(t)\not\in B_{\lambda}(|s_j|_\lambda/2)$.
Consequently, for all but a finite number of $j\in J$, formula \eqref{ftdrs56} holds with $i$ replaced by $j$.

Thus, with probability one, there exists a finite subset $\mathcal K\subset \N$ (depending on $\omega$) such that, for all $i\in\mathbb N\setminus\mathcal K$, formula \eqref{ftdrs56} holds. Let $k$ denote the number of the elements of $\mathcal K$, $k$ being a random variable.  Using \eqref{vtuyr7o5}, we conclude that, with probability one, for each $r\in\N$ and for all $t\in(0,\varepsilon]$,
\begin{align}
\big|\{(Z_i(t),B_i(t))\mid i\in\N\}\cap B_{\hat X}(r)\big|&\le \big|\{(s_i,x_i)\mid i\in\N\}\cap B_{\hat X}(2r)\big|+k\notag\\
&\le K\varkappa(B_{\hat X}(2r))+k\notag\\
&\le KC_3 2^{m+1}\varkappa(B_{\hat X}(r))+k\notag\\
&\le K'\varkappa(B_{\hat X}(r))\label{nohyiu9}
\end{align}
for some $K'>0$.

Step 6. Since the dimension of $X$ is $\ge2$, with probability one, $B_i(t)\ne B_j(t)$ for all $t\ge0$ and $i\ne j$, see e.g.\ (8.29) in \cite{KLR}.

Step 7. Consider any set $\{(u_i,y_i)\mid i\in\N\}\subset\hat X$
such that $y_i\ne y_j$ for $i\ne j$ and there exists a constant $K'>0$ for which
$$\big|\{(u_i,y_i)\mid i\in\N\}\cap B_{\hat X}(r)\big|\le K'\varkappa(B_{\hat X}(r)),\quad r\in\N.$$
We state: $\sum_{i=1}^\infty u_i\delta_{y_i}\in\Theta$.

Indeed, we only have to prove that $\sum_{i=1}^\infty u_i\delta_{y_i}\in\M(X)$. Without loss of generality, we may assume that
$|(u_{i+1},y_{i+1})|_{\hat X} \ge |(u_{i},y_{i})|_{\hat X}$ for all $i$. Just as in Step~4, we get
$  |(u_i,y_i)|_{\hat X}\ge \delta' i^{\frac1{m+1}}$ for all sufficiently large $i$. Here $\delta'>0$ depends on $K'$.

 Fix any compact $A\subset X$. Since $A$ is bounded, for all sufficiently large $i\in\N$ such that $y_i\in A$, we then have
$|u_i|_\lambda\ge \delta'i^{\frac1{m+1}}$. Hence, by \eqref{yut86o},
$$\delta' i^{\frac1{m+1}}\le C_7(-\log(u_i)),$$
and so
$$u_i\le\exp\bigg[-\frac{\delta'}{C_7}\,i^{\frac1{m+1}}\bigg].$$
Thus, $\sum_{i:\, y_i\in A}u_i<\infty$, which implies the statement.

Step 8. By Steps 5--7, with probability one, we have $\mathfrak X(t)=\sum_{i=1}^\infty Z_i(t)\delta_{B_i(t)}\in\Theta$ for all $t\in[0,\varepsilon]$. Furthermore, by the dominated convergence theorem, for each $f\in C_0(X)$, the mapping
$$[0,\varepsilon]\ni t\mapsto \langle f,\mathfrak X(t)\rangle=\sum_{i=1}^\infty Z_i(t)f(B_i(t))$$
is continuous with probability one. Therefore, the $\Theta$-valued stochastic process $(\mathfrak X(t))_{t\in[0,\varepsilon]}$ has a.s.\ continuous sample paths. By the Markov property, the statement (i) of the theorem immediately follows.

(ii) This statement immediately follows from the construction of the stochastic process $(\mathfrak X(t))_{t\ge0}$ and part (i) of the theorem.

(iii) This statement can be easily derived from Corollary~\ref{tfi7r79} analogously to the proof of Theorem~5.1 in \cite{KLR}, see also the proof of Theorem~2.2 in \cite{KLR2}.
\end{proof}

\subsection{An open problem: another diffusion process}

Let us recall that our definition of a tangent space $T_\eta(\K)$ at $\eta\in\K(X)$ was inspired by the  mapping \eqref{uyfgutfrrsesa} and the differential structure on the configuration space $\Gamma(\hat X)$. Alternatively, we may give a definition of a tangent space to $\M(X)$ at a generic Radon measure
$\eta\in\M(X)$. So, for each $\eta\in\M(X)$, we define
$$
T_\eta(\M):=L^2(X\to T(X)\times\R,\eta)=
 L^2(X\to T(X),\eta)\oplus L^2(X,\eta)$$
(compare with \eqref{rte75i6}). We then define a gradient of a differentiable function $F:\M(X)\to\R$ at $\eta\in\M(X)$ as the element $(\nabla^\M F)(\eta)$ of $T_\eta(\M)$ that satisfies
$$(\nabla^{\M}_{(v,h)}F)(\eta)=\langle (\nabla^{\M}F)(\eta),(v,g)\rangle_{T_\eta(\M )}\quad\text{for all }(v,h)\in\mathfrak g.$$
Then, analogously to \eqref{xrdtr}, we get, for each $F\in\FCK$,
\begin{equation*}
(\nabla^\M F)(\eta,x)=\bigg(
\frac1{s_x}(\nabla_x^XF)(\eta)
,\,
\frac1{s_x}(\nabla_x^{\Rp}F)(\eta)
\bigg),\quad\eta\in\K(X),\ x\in\tau(\eta)
. \end{equation*}
This leads us to the Dirichlet form
$$ \mathcal E_\lambda^{\mathbb M}(F,G):=\frac12\int_{\K(X)}\langle
\nabla^{\mathbb M}F,\nabla^{\mathbb M}G\rangle_{T(\M)}\,d\mu_\lambda,\quad F,G\in\FCK.$$
Then one can prove a counterpart of Theorem~\ref{vgfyt7r}.  The generator of this Dirichlet form acts as follows: for each $F\in\FCK$,
$$(L_\lambda^\M F)(\eta)=\int_X\bigg[
\frac1{2s_x}(\Delta_x^XF)(\eta)+
(\mathscr L_x^{\Rp}F)(\eta)\bigg]d\tilde\eta(x),$$
where $(\mathscr L_x^{\Rp}F)(\eta)$ is defined analogously to $\Delta_x^{\Rp}$ (see formulas \eqref{uiyt6785o}, \eqref{fctyr567}) by using the operator
$$ (\mathscr L^{\Rp}f)(s)=\frac12\left(sf''(s)+s\,\frac{l'(s)}{l(s)}\,f'(s)\right).$$
Furthermore, by using the theory of Dirichlet forms, it can be shown that,
 under the assumption that the dimension of $X$ is $\ge2$, there exists a conservative diffusion process on $\K(X)$ which is properly associated with  $(\mathcal E^{\M}_\lambda,D(\mathcal E^\M_\lambda))$.

 One could expect that this Markov process has the form $\sum_{i=1}^\infty s_i(t)\delta_{x_i(t)}$, $t\ge0$, in which the pairs $\big((s_i(t),x_i(t)))\big)_{i=1}^\infty$ are independent, and the generator of each Markov process $(s_i(t),x_i(t))$ in $\hat X$ is given by
$$(\mathscr L^{\hat X}g)(s,x)=(\mathscr L_s^{\Rp}g)(s,x)+\frac1{2s}(\Delta_x^X g)(s,x). $$
However, let us consider the special case of the gamma measure, $l(s)=e^{-s}$. Then
$$ (\mathscr L^{\Rp}f)(s)=\frac s2\big(f''(s)- f'(s)\big).$$
Using e.g.\ \cite[Chap.~XI]{RY},
we  conclude that $\mathscr L^{\Rp}$ is the generator of the Markov process $\mathscr Z(t)$ on $[0,\infty)$ given by
$$\mathscr Z(t)=e^{-t}Q((e^{t}-1)/2),$$
where $Q(t)$ is the square of the $0$-dimensional Bessel process.
Note that, for each  starting point $s>0$, the process $\mathscr Z(t)$ is at 0 with probability $\exp(-s/(1-e^{-t/2}))$, and once $\mathscr Z(t)$ reaches zero it stays there forever. Thus, $\mathscr Z(t)$ is not a conservative process on $\Rp$. Hence, it is natural to suggest that the Markov process on $\hat X$ with generator $\mathscr L^{\hat X}$ is also non-conservative. In this case, the explicit construction of a Markov process on $\hat X$ with generator $L_\lambda^\M$ is not clear to the authors even at a heuristic level, compare with \cite{Surgailis2}.

\subsection*{Acknowledgments}
We are grateful to the referees for their careful reading of the manuscript and helpful suggestions.

The authors   acknowledge the financial support of the SFB~701 ``Spectral structures and topological methods in mathematics'' (Bielefeld University) and the Research Group
``Stochastic Dynamics: Mathematical Theory and Applications'' (Center for Interdisciplinary Research, Bielefeld University). AV was partially supported by grants RFBR 14-01-00373 and OFI-M 13-01-12422.

\end{document}